\numberwithin{equation}{section}
\journal{..}
\newtheorem{definition}{Definition}[section]
\newtheorem{theorem}{Theorem}[section]
\newtheorem{proposition}{Proposition}[section]
\newtheorem{lemma}{Lemma}[section]
\newtheorem{corollary}{Corollary}[section]
\numberwithin{equation}{section}
\begin{document}
\bibliographystyle{amsplain}
\begin{frontmatter}
\title{On conservative algebras of 2-dimensional Algebras}

\author[label2]{Hassan Oubba}
\address[label2]{Moulay Ismail University of Meknès\\ Faculty of Sciences Meknès
B.P. 11201 Zitoune Meknes, Morocco\\
{hassan.oubba@edu.umi.ac.ma}}

\begin{abstract}
In 1990 Kantor introduced the conservative algebra $\mathcal{W}(n)$ of all algebras (i.e. bilinear maps) on the $n$-dimensional vector space. In case $n >1$ the algebra $\mathcal{W}(n)$ does not belong to well known classes of algebras (such as associative, Lie, Jordan, Leibniz algebras). We describe $\frac{1}{2}$derivations, local (resp. $2$-local) $\frac{1}{2}$-derivations and biderivations of $\mathcal{W}(2)$. We also study similar problems for the algebra $\mathcal{W}_2$ of all commutative algebras on the two-dimensional vector space and the algebra
$\mathcal{S}_2$ of all commutative algebras with trace zero multiplication on the two-dimensional space.\\
\textbf{Mathematics Subject Classification }15A03,
15A69, 17A30, 17B40, 17B63, 17D10.
\end{abstract}
\begin{keyword}
Derivation, $\frac{1}{2}$-derivation, local $\frac{1}{2}$-derivation, $2$-local $\frac{1}{2}$-derivation  biderivation , conservative algebra. 
\end{keyword}
\end{frontmatter}
\section*{Introduction} \label{Introduction}
We consider an arbitrary field \( \mathbb{F} \) of characteristic zero. Note that an algebra in this context may lack a unity element and may not necessarily be associative. A **multiplication** on a vector space \( \mathcal{W} \) is defined as a bilinear map \( \mathcal{W} \times \mathcal{W} \to \mathcal{W} \).

Given,  a vector space \( \mathcal{W} \), a linear mapping \( M \) on \( \mathcal{W} \),and a bilinear mapping \( N : \mathcal{W} \times \mathcal{W} \to \mathcal{W} \),
we can define a new multiplication \( [M, N] : \mathcal{W} \times \mathcal{W} \to \mathcal{W} \) as follows:
\[
[M,N](u, v) = M(N(u, v)) - N(M(u), v) - N(u,M(v))
\]
for all \( u, v \in \mathcal{W} \).For an algebra \( A \) with a multiplication operation \( P \) and an element \( x \in A \), we denote by \( L_P x \) the left multiplication of \( x \) by \( P \). When the multiplication \( P \) is fixed, we will simply write \( L x \) in place of \( L_P x \).

In 1990, Kantor \cite{Kan2} introduced a multiplication operation, denoted by \( \cdot \), for the set of all algebras (i.e., all possible multiplications) on an \( n \)-dimensional vector space \( V_n \). This multiplication is defined as follows:
\[
M \cdot N = [L_M e, N],
\]
where \( M \) and \( N \) are multiplications and \( e \) is the fixed vector from \( V_n \). In case \( n > 1 \), the resulting algebra \( \mathcal{W}(n) \) does not belong to the well-known classes of algebras (such as associative, Lie, Jordan, Leibniz algebras). The algebra \( \mathcal{W}(n) \) turns out to be a conservative algebra (see below).

In 1972, Kantor \cite{Kan1} introduced conservative algebras as a generalization of Jordan algebras. Specifically, an algebra \( A \) with multiplication defined as \( P(x, y) = xy \) and underlying vector space \( \mathcal{W} \) is called a conservative algebra if a new multiplication \( F \) can be defined on \( \mathcal{W} \) such that
\[
[L_b, [L_a, P]] = -[L_{F(a,b)}, P] \tag{1}
\]
for all \( a, b \in W \). In other words, the following identity holds for all \( a, b, x, y \) from \( \mathcal{W} \):
\begin{eqnarray*}
-F(a, b)(xy) + (F(a, b)x)y + x(F(a, b)y&=&b(a(xy) - (ax)y - x(ay)) - a((bx)y)\\
&&+ (a(bx))y + (bx)(ay) - a(x(by)) + (ax)(by) + x(a(by)). \quad (2)
\end{eqnarray*}
The algebra with the multiplication \( F \) is said to be associated with \( A \). For example, it is straightforward to observe that every 4-nilpotent algebra is a conservative algebra, with \( F(a, b) = 0 \).

The algebra \( \mathcal{W}(n) \) plays a similar role in the theory of conservative algebras as the Lie algebra of all \( n \times n \) matrices, \( \mathfrak{gl}_n \), plays in the theory of Lie algebras. Specifically, in \cite{Kan2}, Kantor considered the category \( \mathcal{S}_n \) whose objects are conservative algebras of non-Jacobi dimension \( n \). It was proven that the algebra \( \mathcal{W}(n) \) is the universal attracting object in this category; that is, for every algebra \( M \) in \( \mathcal{S}_n \), there exists a canonical homomorphism from \( M \) into the algebra \( \mathcal{W}(n) \).

In particular, all Jordan algebras of dimension \( n \) with unity are contained within \( \mathcal{W}(n) \). The same holds true for all noncommutative Jordan algebras of dimension \( n \) with unity. Some properties of the product in the algebra \( \mathcal{W}(n) \) were studied in \cite{Kay}.

The notion of $\delta$-derivations was initiated by V. Filippov for Lie algebras in \cite{Fil1, Fil2}. The space of 
$\delta$-derivations includes usual derivations ($\delta = 1$), anti-derivations ($\delta = -1$), and elements from the 
centroid. In \cite{Fil2}, it was proved that prime Lie algebras, as a rule, do not have nonzero $\delta$-derivations 
(in case of $\delta \neq 1, -1, 0, \frac{1}{2}$), and all $\frac{1}{2}$-derivations of an arbitrary prime Lie algebra $A$ over the field 
$\mathbb{K}$ ($1 \not\in \mathbb{K}$) with a non-degenerate symmetric invariant bilinear form were described. It was proved 
that if $A$ is a central simple Lie algebra over a field of characteristic $p \neq 2, 3$ with a non-degenerate 
symmetric invariant bilinear form, then any $\frac{1}{2}$-derivation $D$ has the form $D(x) = \lambda x$ for some $\lambda \in \mathbb{K}$.\\
 
In \cite{Fil3}, $\delta$-derivations were investigated for prime alternative and non-Lie Mal'tsev algebras, and it was proved that alternative and non-Lie Mal'tsev algebras with certain restrictions on the ring of operators $F$ have no non-trivial $\delta$-derivation.

Nowadays, local and 2-local operators have become popular for some non-associative algebras 
such as the Lie, Jordan, and Leibniz algebras. The notions of local derivations were introduced in 1990 by R. Kadison \cite{Kad} and D. Larson, A. Sourour \cite{Lar}. Later, in 1997, P. Semrl introduced the notions of 2-local derivations and 2-local automorphisms on algebras \cite{Sem}.

In \cite{Fer} by Ferreira, Kaygorodov, and Lopatkin, a relation between $\frac{1}{2}$-derivations of Lie algebras and transposed Poisson algebras has been established. This relation was used to describe all transposed Poisson structures of several classes of Lie algebras.

Bre$\check{s}$ar et al. introduced the notion of biderivation of rings in \cite{br0}, and they showed that all biderivations of noncommutative prime rings are inner. As is well-known, any biderivation can be decomposed into a sum of a skew-symmetric biderivation and a symmetric biderivation. In recent years, many scholars gave their attentions to the study of biderivations
of many (Lie) algebras using case by case discussion, see \cite{bn, br0, br1, du, ob0, ob1, ob2}. There is no uniform method to determine all biderivations of some classes of (Lie)
algebras.

 This paper is organized as follows. In the first section we recall the necessary information concerning conservative algebra of 2-dimensional algebras and we give a characterization of $\frac{1}{2}$-derivation, local (resp. $2$-local) $\frac{1}{2}$-derivation and biderivation  of $\mathcal{W}(2)$. Section 2 is dedicated to characterization of $\frac{1}{2}$-derivation, local (resp. $2$-local) $\frac{1}{2}$-derivation and biderivation  of terminal algebra $\mathcal{W}_2$. In section 3 we characterize characterization of $\frac{1}{2}$-derivation, local (resp. $2$-local) $\frac{1}{2}$-derivation and biderivation  of  $\mathcal{S}_2$.
\section{Conservative algebra of 2-dimensional algebra}
We consider the space \( \mathcal{W}(2) \) of all multiplications on the 2-dimensional space \( V_2 \), which has a basis \( v_1, v_2 \). In \cite{Kayg1}, Kaygorodov et al. describe the basis and the multiplication table of the conservative algebra \( \mathcal{W}(2) \) as follows:

\[
\begin{array}{c|cccccccc}
\cdot & e_1 & e_2 & e_3 & e_4 & e_5 & e_6&e_7&e_8 \\
\hline
e_1&-e_1&0&0&e_4 &-2e_5& -e_6& -e_7&0\\
e_2&-e_2-e_3& -e_4 & -e_4&0&e_1-e_6-e_7& e_2-e_8& e_3-e_8&e_4\\
e_3&0&0&0&0&0&0&0&0\\
e_4&0&0&0&0&0&0&0&0\\
e_5&e_5&-e_1+e_6&-e_1+e_7&-e_2-e_3+e_8&0& -e_5&-e_5& -e_6-e_7\\
e_6&0&-e_2&-e_3&-2e_4& e_5&0&0&-e_8\\
e_7&0&0&0&0&0&0&0&0\\
e_8&0&0&0&0&0&0&0&0\\
\end{array}
\]
\subsection{$\frac{1}{2}$-derivations of $\mathcal{W}(2)$}
In this subsection, we characterize $\frac{1}{2}$-derivation, Local and $2$-$\frac{1}{2}$-derivation of $\mathcal{W}(2)$.\\

Let us recall definition of $\frac{1}{2}$-derivation and centroid of an arbitrary algebra.
\begin{definition}
\label{def1}
Let $\mathcal{A}$ be an algebra. A  linear map $d: \mathcal{A} \rightarrow \mathcal{A}$ is called $\frac{1}{2}$-derivation if  
$$d(x.y)=\frac{1}{2}(d(x).y+x.d(y)),$$
for all $x,y \in \mathcal{A}$.
\end{definition}
We denoted the set of all $\frac{1}{2}$-derivations of an algebra $\mathcal{A}$  by $\Delta(\mathcal{A})$.
\begin{definition}
\label{def2}
Let $\mathcal{A}$ be an algebras. The set 
$$\Gamma(\mathcal{A})=\left\lbrace  \gamma \in End(\mathcal{A}) \hspace{0.2 cm} : \hspace{0.2 cm} \gamma(x.y)=\gamma(x).y=x.\gamma(y), \hspace{0.2 cm} \forall x,y \in \mathcal{A} \right\rbrace $$
is called the cenroid of $\mathcal{A}$.
\end{definition}

 It is a simple observation to see that $\Gamma(\mathcal{A}) \subseteq \Delta(\mathcal{A})$ and $\Gamma(\mathcal{A})$ is closed under composition and thus has Lie algebra structure. Hence, we can consider $\Gamma(\mathcal{A})$ as Lie subalgebra of $End(\mathcal{A})$.\\
 We recall also the notion of local and $2$-local $\frac{1}{2}$-derivation.
\begin{definition}
A linear map $D : \mathcal{A} \rightarrow \mathcal{A}$ is called a local $\frac{1}{2}$-derivation if for all $x \in \mathcal{A}$ there exists a derivation $d_x$ such that $D(x)=d_x(x)$.
\end{definition}
\begin{definition}
A (not necessary linear) map $D : \mathcal{A} \rightarrow \mathcal{A}$ is called a $2$-local $\frac{1}{2}$-derivation if for all $x,y \in \mathfrak{A}$ there exists a derivation $d_{x,y}$ such that $D(x)=d_{x,y}(x)$ and $D(y)=d_{x,y}(y)$.
\end{definition}
Now, we give our first main result.
\begin{theorem}\label{der}
Let $d:\mathcal{W}(2) \rightarrow \mathcal{W}(2)$ be a linear map. Then $d$ is a $\frac{1}{2}$-derivation if and only if there exists a scalar $\lambda \in \mathbb{K}$ such that  
$$d(x)=\lambda x , \quad \forall x\in \mathcal{W}(2).$$
\end{theorem}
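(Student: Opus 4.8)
The plan is to compute the matrix of $d$ in the basis $e_1,\dots,e_8$ directly from Definition~\ref{def1}. Write $d(e_i)=\sum_{j=1}^{8}a_{ij}e_j$ and, for every ordered pair of basis vectors, substitute into $d(e_ie_j)=\frac12\bigl(d(e_i)e_j+e_id(e_j)\bigr)$, reading all products off the multiplication table of Kaygorodov et al.\ above. This yields a large but sparse homogeneous linear system in the $a_{ij}$, and the claim is that its solution space is exactly the line of scalar matrices; the converse implication is trivial, since $d(x)=\lambda x$ obviously satisfies the defining identity. The whole argument is a guided elimination that exploits that $L_{e_1}$ is diagonal in this basis --- with $e_2,e_3,e_8$ spanning $\ker L_{e_1}$ and $e_1,e_4,e_5,e_6,e_7$ eigenvectors with nonzero eigenvalues --- while $R_{e_1}$ is nearly diagonal.

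First I would run the eight relations coming from the pairs $(e_1,e_i)$, $i=1,\dots,8$. The pair $(e_1,e_1)$ alone forces $d(e_1)=\lambda e_1$, where we set $\lambda:=a_{11}$. The pairs $(e_1,e_4)$ and $(e_1,e_5)$ then give $d(e_4)=\lambda e_4$ and $d(e_5)=\lambda e_5$. The pairs $(e_1,e_2)$, $(e_1,e_3)$, $(e_1,e_8)$ each say $L_{e_1}(d(e_k))=0$, hence $d(e_2),d(e_3),d(e_8)\in\langle e_2,e_3,e_8\rangle$; and $(e_1,e_6)$, $(e_1,e_7)$ give $d(e_6)\in\langle e_5,e_6\rangle$ with $e_6$-coefficient $\lambda$, and $d(e_7)\in\langle e_5,e_7\rangle$ with $e_7$-coefficient $\lambda$.

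Next I would strip off the remaining spurious components by evaluating the identity on pairs with zero product. Since $e_5e_1=e_5$ while $e_6e_1=e_7e_1=0$, the pairs $(e_6,e_1)$ and $(e_7,e_1)$ read $\frac12 a_{65}e_5=0$ and $\frac12 a_{75}e_5=0$, so $d(e_6)=\lambda e_6$ and $d(e_7)=\lambda e_7$. Likewise, since $e_2e_1=-e_2-e_3$ while $e_3e_1=e_8e_1=0$, the pairs $(e_3,e_1)$ and $(e_8,e_1)$ kill the $e_2$-coefficients of $d(e_3)$ and $d(e_8)$.

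All that then survives is the block $d(e_2),d(e_3),d(e_8)\in\langle e_2,e_3,e_8\rangle$, and this is the only place where real work is needed: on this subspace $L_{e_1}$ vanishes, so the eigenvector trick is exhausted and one must feed in the generators $e_2$ and $e_5$. Evaluating the identity on $(e_2,e_1)$, $(e_2,e_2)$, $(e_2,e_3)$ and $(e_2,e_7)$ forces $a_{22}=\lambda$ and expresses every remaining entry of $d(e_2),d(e_3),d(e_8)$ through the single scalar $a_{23}$ (one gets $a_{28}=a_{23}$, $a_{33}=\lambda-a_{23}$, $a_{38}=-a_{23}$, $a_{83}=-a_{23}$, $a_{88}=\lambda-a_{23}$). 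Finally the pair $(e_5,e_2)$, using $e_5e_2=-e_1+e_6$ together with the already known values $d(e_1)=\lambda e_1$, $d(e_5)=\lambda e_5$, $d(e_6)=\lambda e_6$, yields $-\lambda=-\lambda-\frac12 a_{23}$ on the $e_1$-component, hence $a_{23}=0$. Therefore $d(e_i)=\lambda e_i$ for all $i$, i.e.\ $d=\lambda\,\mathrm{id}_{\mathcal W(2)}$. The obstacle here is not conceptual but organizational: one must keep the $8\times 8$ system and the non-symmetric multiplication table under tight control, and recognize that after the $(e_1,\cdot)$ reductions the three-dimensional piece $\langle e_2,e_3,e_8\rangle$ still has to be resolved by hand using $e_2$ and $e_5$.
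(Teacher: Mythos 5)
Your proposal is correct and follows essentially the same route as the paper: write $d$ as a matrix in the basis $e_1,\dots,e_8$, evaluate the $\frac{1}{2}$-derivation identity on suitable pairs of basis vectors using the multiplication table, and solve the resulting linear system to conclude $d=\lambda\,\mathrm{id}$ (the converse being immediate). You merely organize the elimination differently — via the eigenstructure of $L_{e_1}$ and pairs such as $(e_2,e_1)$, $(e_2,e_7)$, $(e_5,e_2)$ instead of the paper's $(e_6,e_2)$, $(e_6,e_3)$, $(e_7,e_8)$, etc. — and your intermediate relations check out against the table.
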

\begin{proof}
Given a $\frac{1}{2}$-derivation $d$ of $\mathcal{W}(2)$, we have $d(e_j)=\sum a_{ij}e_i$ for some $a_{ij}\in \mathbb{K}$. We consider some relations between images of $e_1,\dots,e_8$ with respect to $d$ obtain a system of linear equations on $\lbrace a_{ij}\rbrace$.

Since $-d(e_1)=d(e_1.e_1)=\frac{1}{2}(d(e_1).e_1+e_1.d(e_1))$, then
$$a_{21}=a_{31}=a_{41}=a_{51}=a_{61}=a_{71}=a_{81}=0.$$
Since $d(e_4)=d(e_1.e_4)=\frac{1}{2}(d(e_1).e_4+e_1.d(e_4))$, we have 
$$a_{14}=a_{24}=a_{34}=a_{54}=a_{64}=a_{74}=a_{84}=0 \quad \mbox{and} \quad a_{44}=a_{11}.$$
Since $-2d(e_5)=d(e_1.e_5)=\frac{1}{2}(d(e_1).e_5+e_1.d(e_5))$, we have 
$$a_{15}=a_{25}=a_{35}=a_{45}=a_{65}=a_{75}=a_{85}=0 \quad \mbox{and} \quad a_{55}=a_{11}.$$
Since $-d(e_6)=d(e_1.e_6)=\frac{1}{2}(d(e_1).e_6+e_1.d(e_6))$, we obtain
$$a_{16}=a_{26}=a_{36}=a_{46}=a_{76}=a_{86}=0 \quad \mbox{and} \quad a_{66}=a_{11}.$$

Since $0=d(e_1.e_2)=\frac{1}{2}(d(e_1).e_2+e_1.d(e_2))$, we obtain
$$a_{12}=a_{42}=a_{52}=a_{62}=a_{72}=0 .$$

Since $-d(e_2)=d(e_6.e_2)=\frac{1}{2}(d(e_6).e_2+e_6.d(e_2))$, we obtain
$$a_{56}=a_{32}=a_{82}=0 \quad \mbox{and}\quad a_{22}=a_{11}.$$

Since $0=d(e_1.e_3)=\frac{1}{2}(d(e_1).e_3+e_1.d(e_3))$, we obtain
$$a_{13}=a_{43}=a_{53}=a_{63}=a_{73}=0 .$$

Since $-d(e_3)=d(e_6.e_3)=\frac{1}{2}(d(e_6).e_3+e_6.d(e_3))$, we obtain
$$a_{56}=a_{23}=a_{83}=0 \quad \mbox{and} \quad a_{33}=a_{11}.$$

Since $-d(e_7)=d(e_1.e_7)=\frac{1}{2}(d(e_1).e_7+e_1.d(e_7))$, we obtain
$$a_{17}=a_{27}=a_{37}=a_{47}=a_{67}=a_{87}=0 \quad \mbox{and} \quad a_{77}=a_{11}.$$

Since $-d(e_8)=d(e_6.e_8)=\frac{1}{2}(d(e_6).e_8+e_6.d(e_8))$, we obtain
$$a_{18}=a_{28}=a_{38}=a_{58}=a_{68}=a_{78}=0 \quad \mbox{and} \quad a_{88}=a_{11}.$$

Since $0=d(e_7.e_8)=\frac{1}{2}(d(e_7).e_8+e_7.d(e_8))$, we obtain
$$a_{57}=0 .$$

Since $0=d(e_1.e_8)=\frac{1}{2}(d(e_1).e_8+e_1.d(e_8))$, we obtain
$$a_{48}=0 .$$
Thus, we can easily see that
$$a_{11}=a_{22}=a_{33}=a_{44}=a_{55}=a_{66}=a_{77}=a_{88}=\lambda$$
for some $\lambda \in \mathbb{K}$. Moreover, any other element $a_{ij}$ is equal to zero. Then, $d(e_i)=\lambda e_i$ for all $i=1,\dots,8$. i.e $d(x)=\lambda x$ for all $x \in \mathcal{W}(2)$.
\end{proof}
As application of the above theorem we have the following result. 
\begin{corollary}
Let $\mathcal{W}(2)$ be a conservative algebra of $2$-dimensional algebras. Then
$$\Gamma(\mathcal{W}(2))=\lbrace \lambda id \, :\, \lambda \in \mathbb{K} \rbrace.$$
\end{corollary}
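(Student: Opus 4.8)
The plan is to establish $\Gamma(\mathcal{W}(2)) = \{\lambda\,\mathrm{id} : \lambda \in \mathbb{K}\}$ by a two-sided inclusion, with the substantive direction supplied entirely by Theorem~\ref{der} and the other direction being a one-line bilinearity check.

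First I would verify the inclusion $\{\lambda\,\mathrm{id} : \lambda \in \mathbb{K}\} \subseteq \Gamma(\mathcal{W}(2))$. For $\gamma = \lambda\,\mathrm{id}$ and arbitrary $x,y \in \mathcal{W}(2)$, bilinearity of the product gives $\gamma(x\cdot y) = \lambda(x\cdot y) = (\lambda x)\cdot y = x\cdot(\lambda y) = \gamma(x)\cdot y = x\cdot\gamma(y)$, which is exactly the defining condition of the centroid in Definition~\ref{def2}; hence every scalar multiple of the identity lies in $\Gamma(\mathcal{W}(2))$.

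For the reverse inclusion I would invoke the observation already recorded after Definition~\ref{def2} that $\Gamma(\mathcal{A}) \subseteq \Delta(\mathcal{A})$ for every algebra $\mathcal{A}$: if $\gamma(x)\cdot y = x\cdot\gamma(y) = \gamma(x\cdot y)$, then in particular $\gamma(x\cdot y) = \tfrac12\bigl(\gamma(x)\cdot y + x\cdot\gamma(y)\bigr)$, so $\gamma$ is a $\frac12$-derivation. Applying this with $\mathcal{A} = \mathcal{W}(2)$ yields $\Gamma(\mathcal{W}(2)) \subseteq \Delta(\mathcal{W}(2))$, and by Theorem~\ref{der} the latter set equals $\{\lambda\,\mathrm{id} : \lambda \in \mathbb{K}\}$. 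Combining the two inclusions gives the stated equality.

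There is essentially no obstacle here, since the whole difficulty has been absorbed into Theorem~\ref{der}, whose proof already forces every $\frac12$-derivation of $\mathcal{W}(2)$ to be a scalar. The only minor point to keep in mind is that for the easy inclusion one must check genuine membership in the centroid, i.e. both equalities $\gamma(x)\cdot y = \gamma(x\cdot y)$ and $x\cdot\gamma(y) = \gamma(x\cdot y)$, and not merely the $\frac12$-derivation identity; for $\gamma = \lambda\,\mathrm{id}$ this is immediate from bilinearity.
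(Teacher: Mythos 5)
Your proof is correct and follows exactly the route the paper intends: the paper leaves the corollary as an immediate application of Theorem~\ref{der} together with the remark $\Gamma(\mathcal{A})\subseteq\Delta(\mathcal{A})$ after Definition~\ref{def2}, and the easy inclusion $\lambda\,\mathrm{id}\in\Gamma(\mathcal{W}(2))$ by bilinearity. Nothing further is needed.
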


Now, we are going to prove that any local $\frac{1}{2}$-derivation of $\mathcal{W}(2)$, $\mathcal{W}_2$ and $\mathcal{S}_2$ is a $\frac{1}{2}$-derivation.

\begin{theorem}\label{loc}
Every local $\frac{1}{2}$-derivation of $\mathcal{W}(2)$ is a $\frac{1}{2}$-derivation.
\end{theorem}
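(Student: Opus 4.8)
The plan is to reduce everything to Theorem~\ref{der}, which asserts that the only $\frac{1}{2}$-derivations of $\mathcal{W}(2)$ are the homotheties $x\mapsto\lambda x$. So let $D:\mathcal{W}(2)\to\mathcal{W}(2)$ be a local $\frac{1}{2}$-derivation. By definition, for every $x\in\mathcal{W}(2)$ there is a $\frac{1}{2}$-derivation $d_x$ with $D(x)=d_x(x)$; by Theorem~\ref{der} we may write $d_x=\lambda_x\,\mathrm{id}$ for some $\lambda_x\in\mathbb{K}$, hence $D(x)=\lambda_x x$ for all $x$. The whole problem is therefore reduced to showing that the scalar $\lambda_x$ can be taken independent of $x$, i.e. that $D$ is itself a homothety.

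First I would apply this to the basis vectors, obtaining $D(e_i)=\lambda_i e_i$ for scalars $\lambda_1,\dots,\lambda_8\in\mathbb{K}$. Next, using the linearity of $D$ together with the local property at the vectors $e_i+e_j$ (for $i\neq j$), I get a scalar $\lambda_{ij}$ with $\lambda_{ij}(e_i+e_j)=D(e_i+e_j)=D(e_i)+D(e_j)=\lambda_i e_i+\lambda_j e_j$; comparing coefficients in the basis $e_1,\dots,e_8$ forces $\lambda_i=\lambda_{ij}=\lambda_j$. Running over all pairs yields $\lambda_1=\cdots=\lambda_8=:\lambda$, so $D(e_i)=\lambda e_i$ for every $i$, and by linearity $D(x)=\lambda x$ for all $x\in\mathcal{W}(2)$. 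By Theorem~\ref{der} (equivalently, directly from Definition~\ref{def1}) such a $D$ is a $\frac{1}{2}$-derivation, which is the claim.

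There is essentially no deep obstacle here once Theorem~\ref{der} is available: the argument is pure linear algebra. The only point needing a little care is the comparison-of-coefficients step — one must use that $e_i$ and $e_j$ are linearly independent so that $\lambda_{ij}e_i+\lambda_{ij}e_j=\lambda_i e_i+\lambda_j e_j$ genuinely forces $\lambda_{ij}=\lambda_i$ and $\lambda_{ij}=\lambda_j$ — together with the trivial observation that pairwise relations among the indices $1,\dots,8$ already connect all of them. The same scheme, with the relevant characterization result substituted for Theorem~\ref{der}, will later dispatch the analogous statements for $\mathcal{W}_2$ and $\mathcal{S}_2$.
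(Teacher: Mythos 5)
Your proposal is correct and follows essentially the same route as the paper: invoke Theorem~\ref{der} to write $D(x)=\lambda_x x$, evaluate on the basis, and use linearity on $e_i+e_j$ together with linear independence to force all the scalars $\lambda_i$ to coincide, so that $D=\lambda\,\mathrm{id}$ is a $\frac{1}{2}$-derivation. Your write-up is in fact slightly more careful than the paper's at the coefficient-comparison step, but the argument is the same.
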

\begin{proof}
Let $D$ be a local $\frac{1}{2}$-derivation, for all $x \in \mathcal{W}(2)$ there exists a scalar $\lambda_x$ such that $D(x)=\lambda_x x$. \\
Taking, $x=e_1,\dots,8$ we have $D(e_i)=\lambda_{e_i} e_i$, then it sufficient to show that $\lambda_{e_i}=\lambda_{e_j}$ for $i,j=1,\dots,8$.\\
Since $D$ is linear, then $D(e_i+e_j)=D(e_i)+D(e_j)$. Which implies that $\lambda_{e_i}=\lambda_{e_i+e_j}=\lambda_{e_j}$. So $D(e_i)=\lambda e_i$ for all $i=1,\dots,8$. Therefor, by theorem\ref{der}, $D$ is a $\frac{1}{2}$-derivation of $\mathcal{W}(2)$. This ends the proof.
\end{proof}
Now, we give a characterization of 2-Local $\frac{1}{2}$-derivations on conservative algebras of $2$-dimensional algebras.

\begin{theorem}
Every 2-Local $\frac{1}{2}$-derivation of $\mathcal{W}(2)$, $\mathcal{W}_2$ and $\mathcal{S}_2$ is a $\frac{1}{2}$-derivation.
\end{theorem}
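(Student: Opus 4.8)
The plan is to reduce the statement to Theorem~\ref{der} (and to the corresponding descriptions of $\frac{1}{2}$-derivations of $\mathcal{W}_2$ and $\mathcal{S}_2$ obtained in Sections~2 and~3) by a standard ``linking'' argument. Fix one of the three algebras, call it $\mathcal{A}$, and let $D\colon\mathcal{A}\to\mathcal{A}$ be a $2$-local $\frac{1}{2}$-derivation. By definition, for every pair $x,y\in\mathcal{A}$ there is a $\frac{1}{2}$-derivation $d_{x,y}$ with $D(x)=d_{x,y}(x)$ and $D(y)=d_{x,y}(y)$. For $\mathcal{A}=\mathcal{W}(2)$, Theorem~\ref{der} gives $d_{x,y}=\lambda_{x,y}\,\mathrm{id}$ for some scalar $\lambda_{x,y}\in\mathbb{K}$, so that $D(x)=\lambda_{x,y}\,x$ and $D(y)=\lambda_{x,y}\,y$ \emph{with the same scalar}. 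In particular (taking $y=x$) $D(x)\in\mathbb{K}x$ for every $x$, and one may write $D(x)=\lambda_x x$ for $x\neq 0$, which is well defined.

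Next I would show that $\lambda_x$ does not depend on $x$. If $x,y$ are linearly independent, then comparing $D(x)=\lambda_{x,y}x$ with $D(x)=\lambda_x x$ gives $\lambda_x=\lambda_{x,y}$, and likewise $\lambda_y=\lambda_{x,y}$, hence $\lambda_x=\lambda_y$. If $x,y$ are nonzero but proportional, then since $\dim\mathcal{W}(2)=8\geq 2$ one can choose $z$ linearly independent from $x$ (hence also from $y$), and the previous case yields $\lambda_x=\lambda_z=\lambda_y$. Together with $D(0)=d_{0,0}(0)=0$, this produces a single scalar $\lambda$ with $D(x)=\lambda x$ for all $x\in\mathcal{W}(2)$; that is, $D=\lambda\,\mathrm{id}$, which is a $\frac{1}{2}$-derivation by Theorem~\ref{der}. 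This settles the case $\mathcal{A}=\mathcal{W}(2)$.

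For $\mathcal{A}=\mathcal{W}_2$ and $\mathcal{A}=\mathcal{S}_2$ the same scheme applies once the space $\Delta(\mathcal{A})$ has been determined. If that space again consists only of scalar multiples of the identity, the argument above carries over verbatim. If it is strictly larger, one runs the linking argument at the level of a fixed basis $f_1,\dots,f_m$ ($m\geq 2$): for each $i$, $D(f_i)$ lies in $\{\,d(f_i):d\in\Delta(\mathcal{A})\,\}$, and applying $2$-locality to the pairs $(f_i,f_j)$ and $(f_i,f_i+f_j)$ forces the local $\frac{1}{2}$-derivations witnessing $D$ on different basis vectors to be compatible, which pins down $D$ on the basis; a final application of $2$-locality to an arbitrary pair $(u,v)$ then yields linearity of $D$ together with $D(uv)=\frac{1}{2}\big(D(u)v+uD(v)\big)$.

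The main obstacle is this last point in the non-scalar case: $D$ is not assumed linear a priori, so one cannot simply extend from the basis ``by linearity,'' and one must verify that the finitely many $2$-local constraints coming from basis vectors and their pairwise sums are enough to force both additivity and homogeneity of $D$ and the $\frac{1}{2}$-Leibniz identity for all arguments. This is exactly where the explicit multiplication tables of $\mathcal{W}_2$ and $\mathcal{S}_2$ and their small dimension are used; if, as for $\mathcal{W}(2)$, every $\frac{1}{2}$-derivation turns out to be a scalar, this difficulty disappears entirely and only the elementary linking argument of the first two paragraphs is needed.
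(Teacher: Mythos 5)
Your proposal is correct and follows essentially the same route as the paper: both reduce the statement to the fact that every $\frac{1}{2}$-derivation of each of the three algebras is a scalar multiple of the identity (Theorems~\ref{der}, \ref{der2}, \ref{der3}) and then link the scalars produced by $2$-locality, the paper anchoring every pair at the fixed basis vector $e_1$ (resp.\ $\xi_1$, $z_1$) while you link through linearly independent pairs and handle the proportional case with a third vector. Your contingency plan for a possibly larger space of $\frac{1}{2}$-derivations of $\mathcal{W}_2$ or $\mathcal{S}_2$ never needs to be executed, since the paper shows those spaces also consist only of scalar maps, so the elementary linking argument of your first two paragraphs is exactly what is used in all three cases.
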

\begin{proof}
Let $\Delta$ be an arbitrary 2-local $\frac{1}{2}$-derivation of $\mathcal{W}(2)$. Then, by definition, for any element $a,\in \mathcal{W}(2)$, there exists a $\frac{1}{2}$-derivation $D_{a,e_1}$ of $\mathcal{W}(2)$ such that 
$$\Delta(a)=D_{a,e_1}(a), \, \mbox{and} \, \Delta(e_1)=D_{a,e_1}(e_1).$$
By theorem\ref{der}, there exists a scalar $\lambda^{a,e_1}$ such that $D_{a,e_1}=\lambda^{a,e_i}id$.\\
Let $b \in \mathcal{A}$ be an arbitrary element in $\mathcal{W}(2)$. Then, there exists a $\frac{1}{2}$-derivation of $\mathcal{W}(2)$ such that 
$$\Delta(b)=D_{b,e_1}(b), \, \mbox{and} \, \Delta(e_1)=D_{b,e_1}(e_1).$$ 
By theorem\ref{der}, there exists a scalar $\lambda^{b,e_1}$ such that $D_{b,e_1}=\lambda^{a,e_i}id$.\\ 
Since, $\Delta(e_1)=D_{a,e_1}(e_1)=D_{b,e_1}(e_1)$, we have
$$\lambda^{a,e_1}=\lambda^{b,e_1}$$
That it 
$$D_{a,e_1}=D_{b,e_1}$$
Therefore, for any $a \in \mathcal{W}(2)$
$$\Delta(a)=D_{b,e_1}(a)=\lambda^{b,e_1}a,$$
that is $D_{b,e_1}$ does not depend on $a$. Hence, $\Delta$ is a $\frac{1}{2}$-derivation. This ends the proof.
\end{proof}
\subsection{ Biderivations of $\mathcal{W}(2)$}
In this section, we give a characterization of biderivations of conservative algebras of 2-dimensional Algebras. We start by recall definition of biderivation of an arbitrary algebra.
\begin{definition}
\label{def3}
 Let $\mathcal{A}$ be an arbitrary algebra. A bilinear map $\delta : \mathcal{A} \times \mathcal{A} \rightarrow \mathcal{A}$ is called a biderivation on $\mathcal{A}$ if
 $$\delta(xy,z)=x\delta(y,z)+\delta(x,z)y,$$
 $$\delta(x,yz)=y\delta(x,z)+\delta(x,y)z,$$
 for all $x,y,z \in \mathcal{A}$.
 \end{definition}
  Denote by $BDer(\mathcal{A})$ the set of all biderivations on $\mathcal{A}$ which is clearly a vector space.\\
A $\delta \in BDer(\mathcal{A})$ is called symmetric if $\delta(x,y)=\delta(y,x)$ for all $x,y \in \mathcal{A}$, and is called skew-symmetric if $\delta(x,y)=-\delta(y,x)$ for all $x,y \in \mathcal{A}$. Denote by $BDer_+(\mathcal{A})$ and $BDer_-(\mathcal{A})$ the subspace of all symmetric biderivations and all skew-symmetric biderivations on $\mathcal{A}$ respectively.\\
For any $\delta \in BDer(\mathcal{A})$, we define two bilinear maps by 
$$\delta^+(x,y)=\delta(x,y)+\delta(y,x), \hspace{0.3 cm}\delta^-(x,y)=\delta(x,y)-\delta(y,x).$$
It is easy to see that $\delta^+ \in BDer_+(\mathcal{A})$ and $\delta^- \in BDer_-(\mathcal{A})$. Since $\delta =\frac{1}{2}(\delta^+ + \delta^-)$ it follows that
$$BDer(\mathcal{A})=BDer_+(\mathcal{A}) \oplus BDer_-(\mathcal{A}).$$
To characterize $BDer(\mathcal{A})$, we only need to characterize $BDer_+(\mathcal{A})$ and $BDer_-(\mathcal{A})$.\\
Our main tool for study of biderivations of the algebras $\mathcal{W}(2)$  is the following lemma [\cite{Kayg1}, Theorem 6], where the matrix of a derivation is calculated in the new basis $\lbrace e_1,e_2,...,e_8 \rbrace$.  
\begin{lemma} \cite{Kayg1}
\label{lem1}
Let $d$ be a derivation of $\mathcal{W}(2)$. Then, the matrix $D_d$ of $d$ is of the form:
\begin{center}
$D_{d}= \begin{pmatrix}
		0 & a &0 &0&0&0&0&0\\
		0 & -b &0 &0&0&0&0&0\\
		2a & 0 &b &0&0&0&0&0\\
		0 & 0 &3a &2b&0&0&0&0\\
		0 & 0 &0 &0&0&0&0&0\\
		0 & 0 &0 &0&-a&b&0&0\\
		0 & 0 &0 &0&0&0&b&a\\
		0 & 0 &0 &0&0&0&0&0
		\end{pmatrix}. $
\end{center}
where $a,b \in \mathbb{F}$.
\end{lemma}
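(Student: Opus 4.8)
This is \cite{Kayg1} (Theorem~6), so we only sketch the argument; it is the same kind of linear-algebra computation used for Theorem~\ref{der}. Write an arbitrary linear map $d:\mathcal{W}(2)\to\mathcal{W}(2)$ in the basis $\{e_1,\dots,e_8\}$ as $d(e_j)=\sum_{i=1}^{8}a_{ij}e_i$ with $a_{ij}\in\mathbb{F}$. Imposing the derivation identity $d(e_i\cdot e_j)=d(e_i)\cdot e_j+e_i\cdot d(e_j)$ for each of the $64$ ordered pairs $(i,j)$ and reading the products off the multiplication table turns the problem into a homogeneous linear system in the unknowns $a_{ij}$; the claim is that its solution space is exactly the $2$-parameter family displayed, so that $\dim\mathrm{Der}(\mathcal{W}(2))=2$, a basis being given by the two matrices obtained for $(a,b)=(1,0)$ and $(a,b)=(0,1)$.

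The plan is to solve this system in a convenient order. First one uses the structural simplification that the rows of $e_3,e_4,e_7,e_8$ in the table vanish, hence $L_{e_3}=L_{e_4}=L_{e_7}=L_{e_8}=0$, so that for $i\in\{3,4,7,8\}$ the identity collapses to $0=d(e_i)\cdot e_j+e_i\cdot d(e_j)$. Next one feeds in the relations with $e_1$ and with $e_6$ as the left factor, for $j=1,\dots,8$: since $L_{e_1}$ and $L_{e_6}$ act diagonally on the basis $\{e_1,\dots,e_8\}$, these equations force almost all off-diagonal entries $a_{ij}$ to vanish and produce the first relations among the remaining ones. Finally one brings in the products with $e_2$ and with $e_5$ as the left factor; their outputs are genuine sums of two or three basis vectors (for instance $e_2\cdot e_1=-e_2-e_3$ and $e_5\cdot e_2=-e_1+e_6$), so these equations couple several coordinates at once and determine all the surviving entries in terms of two free scalars, which one names $a$ and $b$; this yields the matrix $D_d$.

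To finish one must check that no equation of the system was overlooked, equivalently that every member of the $2$-parameter family actually satisfies the derivation identity on all pairs $(e_i,e_j)$; this is a direct substitution. The main obstacle is not conceptual but organisational: the system has $64$ equations and one has to arrange the elimination so that it terminates cleanly. The one delicate point is the treatment of the products returning linear combinations of several basis elements, because there a single equation contributes constraints to several unknowns simultaneously; everything else is routine bookkeeping.
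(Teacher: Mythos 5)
The paper gives no proof of Lemma~\ref{lem1} at all: it is quoted from \cite{Kayg1} (Theorem~6), so there is no internal argument to compare with, and your plan---write $d(e_j)=\sum_i a_{ij}e_i$, impose the $64$ identities $d(e_i\cdot e_j)=d(e_i)\cdot e_j+e_i\cdot d(e_j)$, and solve the homogeneous linear system---is indeed the natural elementary route, and your structural observations about the table ($L_{e_3}=L_{e_4}=L_{e_7}=L_{e_8}=0$, $L_{e_1}$ and $L_{e_6}$ diagonal, the $e_2$- and $e_5$-rows producing the coupled equations) are accurate. But the write-up stops exactly where the content of the lemma begins: the elimination is never carried out, the statement that the solution space ``is exactly the $2$-parameter family displayed'' is asserted rather than derived, and the final substitution check that you yourself prescribe is also left undone. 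As it stands this is a plan for a proof, not a proof.

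The omission is not harmless, because the deferred check fails for the data as printed in this paper. Test the single identity coming from $e_1\cdot e_1=-e_1$. Reading $D_d$ column-wise ($d(e_j)=\sum_i (D_d)_{ij}e_i$), the displayed matrix gives $d(e_1)=2ae_3$, so $d(e_1)\cdot e_1+e_1\cdot d(e_1)=0$ while $d(e_1\cdot e_1)=-2ae_3$, forcing $a=0$; reading it row-wise gives $d(e_1)=ae_2$, and then $d(e_1)\cdot e_1+e_1\cdot d(e_1)=a\,e_2\cdot e_1=-a(e_2+e_3)\neq -ae_2$ unless $a=0$. So the displayed two-parameter family is not a family of derivations of the multiplication table reproduced at the start of Section~1; the form compatible with that table is the one recorded later in (\ref{eq0}) (there $d(e_1)=-ae_5$, which does satisfy this identity), so either the table or the matrix of Lemma~\ref{lem1} has been mis-transcribed from \cite{Kayg1}. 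Executing even the first equation of your own system would have exposed this, so a complete proof must either perform the elimination against the stated table (and correct the matrix accordingly) or verify the displayed matrix against the basis and table actually used in \cite{Kayg1}. A smaller inaccuracy: the identities with $e_1$ and $e_6$ as left factors do not force ``almost all off-diagonal entries'' to vanish; for instance $e_1\cdot e_1=-e_1$ only yields $a_{11}=a_{41}=a_{81}=0$ and $a_{31}=a_{21}$, and the identity for $e_1\cdot e_4=e_4$ expresses $a_{24},a_{34},a_{84}$ in terms of $a_{51}$ rather than killing them; the off-diagonal entries are pinned down only after the $e_2$- and $e_5$-equations enter.
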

Let $\delta$ be a  biderivation of a conservative algebras of 2-dimensional Algebras $\mathcal{W}(2)$ and $x,y \in \mathcal{W}(2)$, such that $x=\sum_{i=1}^{8}x_{i}e_{i}$ and $y=\sum_{i=1}^{8} y_{i}e_{i}$. Then, by the bilinearity of $\delta^-$, we obtain, 
$$\delta(x,y)=\sum_{i=1}^{8} \sum_{j=1}^{8} x_{i} y_{j} \delta(e_i,e_j)=
\sum_{i=1}^{8} \sum_{j=1}^{8} x_{i} y_{j} \delta_{e_i}(e_j).$$ 
By Theorem \ref{lem1}, the matrix $D_{e_i}$ of $\delta_{e_i}$, for $i=1,2,...,8$ in the basis $\lbrace e_1,e_2,...,e_8 \rbrace$ is of the form
\begin{equation}
\label{eq0}
D_{e_i}= \begin{pmatrix}
		0 & a_i &a_i &0&0&0&0&0\\
		0 & b_i &0 &a_i&0&0&0&0\\
		0 & 0 &b_i &a_i&0&0&0&0\\
		0 & 0 &0 &2b_i&0&0&0&0\\
		-a_i & 0 &0 &0&-b_i&a_i&a_i&0\\
		0 & -a_i &0 &0&0&0&0&a_i\\
		0 & 0 &-a_i &0&0&0&0&a_i\\
		0 & 0 &0 &-a_i&0&0&0&b_i
		\end{pmatrix}. 
\end{equation}
We state our first main result
\begin{proposition}
\label{pro1}
The conservative algebra of 2-dimensional algebras $\mathcal{W}(2)$ has no nontrivial skew-symmetric biderivation.
\end{proposition}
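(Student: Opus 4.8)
The plan is to lean entirely on the description~(\ref{eq0}) of $D_{e_i}$ already established above. Because $\delta$ is a biderivation, fixing the first slot makes each $\delta_{e_i}\colon z\mapsto\delta(e_i,z)$ a derivation of $\mathcal{W}(2)$, so by Lemma~\ref{lem1} its matrix in the basis $\{e_1,\dots,e_8\}$ is the one in~(\ref{eq0}) and is controlled by just two scalars $a_i,b_i\in\mathbb{F}$. Since $\delta(x,y)=\sum_{i,j}x_iy_j\,\delta_{e_i}(e_j)$, it suffices to prove $a_i=b_i=0$ for all $i=1,\dots,8$; then $\delta\equiv 0$, which is the claim.

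The first step is to feed skew-symmetry on the diagonal into~(\ref{eq0}). Reading off columns, $\delta(e_i,e_i)=\delta_{e_i}(e_i)$ is precisely the $i$-th column of $D_{e_i}$; as $\mathbb{F}$ has characteristic zero, $\delta(e_i,e_i)=-\delta(e_i,e_i)$ forces that column to vanish, for every $i=1,\dots,8$. Comparing coefficients of $e_1,\dots,e_8$ in these eight vanishing conditions immediately gives $a_1=\dots=a_8=0$ together with $b_2=b_3=b_4=b_5=b_8=0$, so only $b_1,b_6,b_7$ remain to be killed.

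The second step uses skew-symmetry once more, now off the diagonal. With every $a_i=0$ the matrix $D_{e_i}$ collapses to $\mathrm{diag}(0,b_i,b_i,2b_i,-b_i,0,0,b_i)$, so in particular $b_2=0$ makes $\delta_{e_2}$ the zero map and hence $\delta(e_2,e_j)=0$ for all $j$. Then $\delta(e_j,e_2)=-\delta(e_2,e_j)=0$, while directly $\delta(e_j,e_2)=\delta_{e_j}(e_2)=b_j e_2$; comparing yields $b_j=0$ for every $j$, in particular $b_1=b_6=b_7=0$. Thus all the $a_i,b_i$ vanish, every $\delta_{e_i}$ is zero, and $\delta=0$.

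I do not expect a genuine obstacle here once~(\ref{eq0}) is granted: the argument is bookkeeping with the columns of eight $8\times 8$ matrices plus two invocations of $\delta(x,y)=-\delta(y,x)$. The only thing to watch is the indexing — that $\delta(e_i,e_j)$ is the $j$-th, not the $i$-th, column of $D_{e_i}$ — and the signs appearing in~(\ref{eq0}), since a slip there would leave a phantom free parameter and derail the conclusion.
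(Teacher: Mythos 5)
Your proposal follows essentially the same route as the paper: reduce everything to the matrices (\ref{eq0}), kill parameters with the diagonal condition $\delta(e_i,e_i)=0$, then finish by pairing against $e_2$. There is, however, one bookkeeping slip in your first step: the $5$-th column of $D_{e_5}$ is $(0,0,0,0,-b_5,0,0,0)^{T}$, so the diagonal condition for $i=5$ kills only $b_5$ and leaves $a_5$ free; your claim that the diagonal forces $a_1=\dots=a_8=0$ is therefore false at $i=5$, and the assertion that every $D_{e_i}$ collapses to $\mathrm{diag}(0,b_i,b_i,2b_i,-b_i,0,0,b_i)$ is not yet justified for $i=5$ --- exactly the kind of phantom-parameter issue you flagged yourself, except that here you discarded a parameter that was still alive. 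The slip is harmless because your second step repairs it once carried out with the full second column: since $a_2=b_2=0$ gives $\delta_{e_2}=0$, skew-symmetry yields $0=\delta(e_j,e_2)=\delta_{e_j}(e_2)=a_j e_1+b_j e_2-a_j e_6$ for every $j$, whence $a_j=b_j=0$ for all $j$, in particular $a_5=0$ together with $b_1=b_6=b_7=0$. This is precisely the paper's argument, which lists $a_5$ among the parameters eliminated by the relations $\delta^-(e_i,e_2)=-\delta^-(e_2,e_i)$ for $i=1,5,6,7$; with that correction your proof is complete and coincides with the paper's.
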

\begin{proof}
Let $\delta^-$ be a skew-symmetric biderivation of $\mathcal{W}(2)$. Then, for $i=1,...,8$ the matrix of $\delta^-_{e_i}$ is of the form (\ref{eq0}).\\ 
Since $ \delta^-$ is skew-symmetric, then, $\delta^-(e_i,e_i)=0$ for $i=1,2,...,8$. Therefore,
$$a_1=a_2=b_2=a_3=b_3=a_4=b_4=b_5=a_6=a_7=a_8=b_8=0.$$
For $i=1,5,6,7$, the equality $\delta^-(e_i,e_2)=-\delta^-(e_2,e_i)$ implies that
$$b_1=a_5=b_6=b_7=0.$$
That is, for $i=1, \dots,8$. 
\begin{center}
$D_{e_i}= \begin{pmatrix}
		0 & 0 &0 &0&0&0&0&0\\
		0 & 0 &0 &0&0&0&0&0\\
		0 & 0 &0 &0&0&0&0&0\\
		0 & 0 &0 &0&0&0&0&0\\
		0 & 0 &0 &0&0&0&0&0\\
		0 & 0 &0 &0&0&0&0&0\\
		0 & 0 &0 &0&0&0&0&0\\
		0 & 0 &0 &0&0&0&0&0
		\end{pmatrix}. $
\end{center}
Therefore, $\delta^-(x,y)=0$ for all $x,y \in \mathcal{W}(2)$.
\end{proof}
\begin{proposition}
\label{pro2}
The conservative algebra of 2-dimensional algebras $\mathcal{W}(2)$ has no nontrivial symmetric biderivation.
\end{proposition}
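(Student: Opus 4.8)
The plan is to mimic exactly the strategy already used for skew-symmetric biderivations in Proposition~\ref{pro1}, now exploiting the extra constraints coming from symmetry $\delta^+(e_i,e_j)=\delta^+(e_j,e_i)$ together with the very rigid shape (\ref{eq0}) forced by Lemma~\ref{lem1}. First I would record that, since $\delta^+\in BDer(\mathcal{W}(2))$, for each $i$ the matrix $D_{e_i}$ of the derivation $\delta^+_{e_i}=\delta^+(e_i,\cdot)$ is of the form (\ref{eq0}), depending on two scalars $a_i,b_i$; likewise the maps $\delta^+(\cdot,e_j)$ are derivations and hence of the same form. This gives the full $8\times 8\times 8$ array of structure constants of $\delta^+$ in terms of the sixteen unknowns $a_1,\dots,a_8,b_1,\dots,b_8$.

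The heart of the argument is then a finite bookkeeping: for suitable pairs $(i,j)$ I would write out both $\delta^+(e_i,e_j)=\delta^+_{e_i}(e_j)$ (the $j$-th column read off from $D_{e_i}$) and $\delta^+(e_j,e_i)=\delta^+_{e_j}(e_i)$ (the $i$-th column read off from $D_{e_j}$) and equate them coordinate-by-coordinate. For instance, comparing the $e_1$- and $e_2$-entries, comparing against $e_5,e_6,e_7$ (the rows where (\ref{eq0}) has its $-a_i$ entries), and using the pairs involving $e_3,e_4,e_8$ should successively force $a_i=0$ and $b_i=0$ for all $i$. A second, independent batch of relations comes from the biderivation identities themselves applied to basis triples $(e_i,e_j,e_k)$: each such identity, combined with the multiplication table of $\mathcal{W}(2)$ given in Section~1, yields linear relations among the $a_i,b_i$. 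I expect that after a short list of such comparisons — no more than a dozen — every $a_i$ and every $b_i$ is pinned to $0$, so that each $D_{e_i}$ is the zero matrix and hence $\delta^+(x,y)=0$ for all $x,y$.

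The main obstacle is purely organizational rather than conceptual: one must choose the comparison pairs $(i,j)$ in an order that makes the linear system triangular, so that each new equation kills a fresh variable instead of producing a tangle of coupled relations; a careless ordering leads to an underdetermined-looking system that in fact still collapses but only after messier elimination. There is also a mild subtlety in that symmetry is a weaker constraint than skew-symmetry on the diagonal (the conditions $\delta^+(e_i,e_i)=\delta^+(e_i,e_i)$ are vacuous), so unlike in Proposition~\ref{pro1} the diagonal pairs give nothing and all the mileage must come from the off-diagonal comparisons and from the two defining identities of a biderivation; I would therefore lean primarily on the identities $\delta^+(e_ie_j,e_k)=e_i\delta^+(e_j,e_k)+\delta^+(e_i,e_k)e_j$ for well-chosen products $e_ie_j$ taken from the table. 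Once all scalars vanish the conclusion $BDer_+(\mathcal{W}(2))=0$ is immediate, and combined with Proposition~\ref{pro1} and the decomposition $BDer(\mathcal{W}(2))=BDer_+(\mathcal{W}(2))\oplus BDer_-(\mathcal{W}(2))$ it yields that $\mathcal{W}(2)$ has no nonzero biderivation at all.
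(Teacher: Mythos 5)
Your plan follows essentially the same route as the paper: each map $\delta^+(e_i,\cdot)$ is a derivation, hence of the form (\ref{eq0}) with parameters $a_i,b_i$, and the symmetry conditions on off-diagonal basis pairs (the paper uses $\delta^+(e_i,e_2)=\delta^+(e_2,e_i)$ for $i=1,5,6,7$ and $\delta^+(e_3,e_4)=\delta^+(e_4,e_3)$) force all the $a_i,b_i$ to vanish, giving $\delta^+=0$. The additional batch of biderivation identities on basis triples that you propose is unnecessary; the off-diagonal symmetry comparisons alone suffice, exactly as in the paper's proof.
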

\begin{proof}
Let $\delta^+$ be a skew-symmetric biderivation of $\mathcal{W}(2)$. Then, for $i=1,...,8$ the matrix of $\delta^+_{e_i}$ is of the form (\ref{eq0}).\\
For $i=1,5,6,7$, the equality $\delta^+(e_i,e_2)=\delta^+(e_2,e_i)$ implies that
$$a_1=b_1=a_2=b_2=a_5=b_5=a_6=b_6=a_7=b_7=0.$$
From the equality $\delta^+(e_3,e_4)=\delta^+(e_4,e_3)$, we get
$$a_3=b_3=a_4=b_4.$$
Therefore, for any $i=1,\dots, 8$. We have 
\begin{center}
$D_{e_i}= \begin{pmatrix}
		0 & 0 &0 &0&0&0&0&0\\
		0 & 0 &0 &0&0&0&0&0\\
		0 & 0 &0 &0&0&0&0&0\\
		0 & 0 &0 &0&0&0&0&0\\
		0 & 0 &0 &0&0&0&0&0\\
		0 & 0 &0 &0&0&0&0&0\\
		0 & 0 &0 &0&0&0&0&0\\
		0 & 0 &0 &0&0&0&0&0
		\end{pmatrix}. $
\end{center}
Therefore, $\delta^+(x,y)=0$ for all $x,y \in \mathcal{W}(2)$.
\end{proof}
Our second main result is now stated and proved in the following theorem.
\begin{theorem}
\label{thm2}
Let $\mathcal{W}(2)$ be a conservative algebra of 2-dimensional algebras. Then, $\delta$ is a biderivation of $\mathcal{W}(2)$ if and only if $\delta$ is a zero mapping. i.e 
$BDer(\mathcal{W}(2))=\lbrace 0 \rbrace$.
\end{theorem}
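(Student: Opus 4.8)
The plan is to obtain this as an immediate corollary of Propositions \ref{pro1} and \ref{pro2}, using the direct-sum decomposition $BDer(\mathcal{W}(2)) = BDer_+(\mathcal{W}(2)) \oplus BDer_-(\mathcal{W}(2))$ recorded just before Lemma \ref{lem1}. First I would take an arbitrary $\delta \in BDer(\mathcal{W}(2))$ and pass to its symmetric and skew-symmetric components $\delta^+(x,y) = \delta(x,y) + \delta(y,x)$ and $\delta^-(x,y) = \delta(x,y) - \delta(y,x)$, recalling that $\delta^+ \in BDer_+(\mathcal{W}(2))$, $\delta^- \in BDer_-(\mathcal{W}(2))$, and $\delta = \tfrac{1}{2}(\delta^+ + \delta^-)$.

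Next I would apply Proposition \ref{pro2} to conclude $\delta^+ = 0$ and Proposition \ref{pro1} to conclude $\delta^- = 0$. Substituting both into $\delta = \tfrac{1}{2}(\delta^+ + \delta^-)$ gives $\delta = 0$, which is the forward implication. The converse is trivial, since the zero bilinear map obviously satisfies both Leibniz identities of Definition \ref{def3}; hence $BDer(\mathcal{W}(2)) = \{0\}$.

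There is essentially no obstacle at the level of this theorem: all the real work has already been done in Propositions \ref{pro1} and \ref{pro2}. The only point deserving a word of care is upstream of those propositions, namely the justification that for each fixed $i$ the partial map $\delta_{e_i} : y \mapsto \delta(e_i, y)$ is a derivation of $\mathcal{W}(2)$ (this is exactly the second identity in Definition \ref{def3} with $x = e_i$), so that Lemma \ref{lem1} forces $\delta_{e_i}$ to have the matrix shape (\ref{eq0}); one should make sure the sign and coefficient conventions there are consistent so that imposing symmetry (resp.\ skew-symmetry) on $\delta$ really does kill every parameter $a_i, b_i$. That verification is carried out inside the proofs of the two propositions, so here it suffices to quote them.
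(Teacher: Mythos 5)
Your proposal is correct and matches the paper's (implicit) argument exactly: the theorem is deduced from the decomposition $BDer(\mathcal{W}(2))=BDer_+(\mathcal{W}(2))\oplus BDer_-(\mathcal{W}(2))$ together with Propositions \ref{pro1} and \ref{pro2}, which is precisely what you do. No further comment is needed.
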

\section{Terminal algebra $\mathcal{W}_2$ of all $2$-dimensional commutative algebras}
Consider the subalgebra \( \mathcal{W}_2 \) of \( \mathcal{W}(2) \) that was introduced by Kantor in \cite{Kan3, Kan2} as the algebra of all bilinear commutative operations on the 2-dimensional space \( V_2 \) with a basis \( v_1, v_2 \). The algebra \( \mathcal{W}_2 \) is known to be terminal \cite{Kan3} and therefore it is conservative. Kaygorodov et al. prove in \cite{Kyg1} that the dimension of the algebra \( \mathcal{W}_2 \) is 6, and \( \lbrace \xi_1, \dots, \xi_6 \rbrace \) is a basis of this algebra. The multiplication table of the terminal algebra \( \mathcal{W}_2 \) is given by:

\[
\begin{array}{c|cccccc}
\cdot & \xi_1 & \xi_2 & \xi_3 & \xi_4 & \xi_5 & \xi_6 \\
\hline
\xi_1 & -\xi_1 & 0 & \xi_3 & -2\xi_4 & -\xi_5 & 0 \\
\xi_2 & -\xi_2 & -2\xi_3 & 0 & \xi_1 - \xi_5 & \xi_2 - 2\xi_6 & \xi_3 \\
\xi_3 & 0 & 0 & 0 + 0 & 0 & 0 & 0 \\
\xi_4 & \xi_4 & \xi_5-2\xi_1 & -\xi_2+\xi_6 & 0 & -2\xi_4 & -\xi_5 \\
\xi_5 & 0 & -\xi_2 & -2\xi_3 & \xi_4 & 0 & -\xi_6 \\
\xi_6 & 0 & 0 & 0 & 0 & 0 & 0 \\
\end{array}
\]
\subsection{$\frac{1}{2}$-derivations of $\mathcal{W}_2$}
In this subsection, we characterize $\frac{1}{2}$-derivation and Local (resp. $2$-local) $\frac{1}{2}$-derivation of $\mathcal{W}_2$.\\

The following theorem characterize $\frac{1}{2}$-derivation of $\mathcal{W}_2$.
\begin{theorem}\label{der2}
Let $d:\mathcal{W}_2 \rightarrow \mathcal{W}_2$ be a linear map. Then $d$ is a $\frac{1}{2}$-derivation if and only if there exists a scalar $\lambda \in \mathbb{K}$ such that  
$$d(x)=\lambda x , \quad \forall x\in \mathcal{W}_2.$$
\end{theorem}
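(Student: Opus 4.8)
The plan is to prove this the same way as Theorem~\ref{der}. The ``if'' direction is immediate: for $d(x)=\lambda x$ one has $d(x\cdot y)=\lambda(x\cdot y)=\tfrac12(\lambda x\cdot y+x\cdot\lambda y)$ by bilinearity of the product, so $d\in\Delta(\mathcal{W}_2)$. For the ``only if'' direction, write $d(\xi_j)=\sum_{i=1}^{6}a_{ij}\xi_i$ with $a_{ij}\in\mathbb{K}$ and substitute well-chosen pairs of basis vectors into the identity $d(\xi_i\cdot\xi_j)=\tfrac12\bigl(d(\xi_i)\cdot\xi_j+\xi_i\cdot d(\xi_j)\bigr)$, reading the products off the multiplication table of $\mathcal{W}_2$. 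Each substitution yields linear relations among the $36$ coefficients $a_{ij}$, and the goal is to show the whole system forces $a_{ii}=\lambda$ for a single scalar $\lambda$ and $a_{ij}=0$ for $i\neq j$, i.e.\ $d=\lambda\,\mathrm{id}$.

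Concretely, I would process the relations in an order in which each one only involves images already pinned down. Begin with $\xi_1\cdot\xi_1=-\xi_1$: comparing coefficients gives $a_{i1}=0$ for $i\neq1$, so $d(\xi_1)=\lambda\xi_1$ with $\lambda:=a_{11}$. Next pair $\xi_1$ against $\xi_2,\xi_3,\xi_4,\xi_5$ (using $\xi_1\xi_2=0$, $\xi_1\xi_3=\xi_3$, $\xi_1\xi_4=-2\xi_4$, $\xi_1\xi_5=-\xi_5$): each relation annihilates almost every off-diagonal entry of the column in question and forces the diagonal entry to be $\lambda$, giving $d(\xi_3)=\lambda\xi_3$, $d(\xi_4)=\lambda\xi_4$, and $d(\xi_5)=\lambda\xi_5+a_{45}\xi_4$ with $a_{45}$ still undetermined; then $\xi_2\cdot\xi_1=-\xi_2$ gives $d(\xi_2)=\lambda\xi_2$, and $\xi_2\cdot\xi_6=\xi_3$ together with $\xi_5\cdot\xi_6=-\xi_6$ gives $d(\xi_6)=\lambda\xi_6+a_{36}\xi_3$ with $a_{36}$ still undetermined. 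Finally the two leftover coefficients are killed by one more relation each whose right-hand side is not an eigenvector of a left multiplication: $\xi_2\cdot\xi_4=\xi_1-\xi_5$ forces $a_{45}=0$, and $\xi_4\cdot\xi_6=-\xi_5$ forces $a_{36}=0$. This completes $d=\lambda\,\mathrm{id}$.

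The one step that is not entirely mechanical is exactly the appearance of those leftover coefficients, and it is worth isolating the reason. If $d(\xi_i)$ is already known to equal $\lambda\xi_i$ and one constrains $d(\xi_j)=\sum_k a_{kj}\xi_k$ through an eigen-relation $\xi_i\cdot\xi_j=c\,\xi_j$, then comparing coefficients only yields $a_{kj}=0$ for those $\xi_k$ whose $L_{\xi_i}$-eigenvalue is different from $2c$ --- the factor $\tfrac12$ in the definition of a $\frac{1}{2}$-derivation is precisely what lets the resonant value $2c$ occur (for an ordinary derivation the analogous obstruction would require eigenvalue $c$, which never happens here, so such coefficients would vanish at once). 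In $\mathcal{W}_2$ the resonance occurs for $\xi_4$ versus $\xi_5$ under $L_{\xi_1}$ (eigenvalues $-2$ and $-1$) and for $\xi_3$ versus $\xi_6$ under $L_{\xi_5}$, which is why those two coefficients survive the eigen-relations and must be removed separately as above. Apart from this point the proof is routine bookkeeping and is lighter than the eight-dimensional computation already carried out for $\mathcal{W}(2)$; one only needs to read the table with care (for instance the entry for $\xi_3\cdot\xi_3$, printed $0+0$, is simply $0$).
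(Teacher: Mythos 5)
Your proposal is correct and follows essentially the same method as the paper: expand $d$ in the basis $\{\xi_1,\dots,\xi_6\}$ and compare coefficients in the identity $d(\xi_i\xi_j)=\tfrac{1}{2}\bigl(d(\xi_i)\xi_j+\xi_i d(\xi_j)\bigr)$ for selected products read off the multiplication table. The only difference is bookkeeping: the paper pins down the columns of $\xi_5$ and $\xi_6$ in one step each via $\xi_2\xi_4=\xi_1-\xi_5$ and $\xi_4\xi_3=\xi_6-\xi_2$, whereas you reach the same conclusion through eigen-relations plus two extra identities killing the resonant coefficients $a_{45}$ and $a_{36}$; all of your steps check against the table.
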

\begin{proof}
Let $d$ be a $\frac{1}{2}$-derivation of $\mathcal{W}_2$ and $d(\xi_j)=\sum a_{ij}e_i$ for some $a_{ij}$ from $\mathbb{K}$.\\
Then the equality  $-d(\xi_1)=d(\xi_1.\xi_1)=\frac{1}{2}(d(\xi_1).\xi_1+\xi_1.d(\xi_1))$ implies that 
$$a_{21}=a_{31}=a_{41}=a_{51}=a_{61}=0.$$
Since $-d(\xi_2)=d(\xi_2.\xi_1)=\frac{1}{2}(d(\xi_2).\xi_1+\xi_2.d(\xi_1))$, we can see that
$$a_{12}=a_{32}=a_{42}=a_{52}=a_{62}\quad \mbox{and} \quad a_{22}=a_{11}.$$
Since $d(\xi_3)=d(\xi_1.\xi_3)=\frac{1}{2}(d(\xi_1).\xi_3+\xi_1.d(\xi_3))$, we obtain
$$a_{13}=a_{23}=a_{43}=a_{53}=a_{63}\quad \mbox{and} \quad a_{33}=a_{11}.$$
Since $-d(\xi_4)=d(\xi_1.\xi_4)=\frac{1}{2}(d(\xi_1).\xi_4+\xi_1.d(\xi_4))$, we get
$$a_{14}=a_{24}=a_{34}=a_{54}=a_{64} \quad \mbox{and} \quad a_{44}=a_{11}.$$
The equality $d(\xi_1-\xi_5)=d(\xi_2.\xi_4)=\frac{1}{2}(d(\xi_2).\xi_4+\xi_2.d(\xi_4))$ implies that 
$$a_{15}=a_{25}=a_{35}=a_{45}=a_{65} \quad \mbox{and} \quad a_{55}=a_{11}.$$
From the equality $d(\xi_6-\xi_2)=d(\xi_4.\xi_3)=\frac{1}{2}(d(\xi_4).\xi_3+\xi_4.d(\xi_3))$ we deduce
$$a_{16}=a_{26}=a_{36}=a_{46}=a_{56} \quad \mbox{and} \quad a_{66}=a_{11}.$$
Therefore, it is easy to see that 
$$a_{11}=a_{22}=a_{33}=a_{44}=a_{55}=a_{66}=\lambda,$$
and any other element $a_{ij}$ is equal to zero. In other words, $d(\xi_i)=\lambda \xi_i$ ($i=1,\dots,6$). The theorem is proven.
\end{proof}
\begin{corollary}
Let $\mathcal{W}_2$ be a terminal algebra, then 
$$\Gamma(\mathcal{W}_2)=\lbrace \lambda id \, :\, \lambda \in \mathbb{K} \rbrace.$$
\end{corollary}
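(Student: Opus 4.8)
The plan is to mirror exactly the computation carried out in the proof of Theorem~\ref{der2}: we already know from that theorem that every $\frac{1}{2}$-derivation of $\mathcal{W}_2$ has the form $d(x)=\lambda x$ for some $\lambda\in\mathbb{K}$. So the corollary reduces to identifying which of these scalar maps actually lie in the centroid $\Gamma(\mathcal{W}_2)$.

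First I would recall the inclusion $\Gamma(\mathcal{W}_2)\subseteq\Delta(\mathcal{W}_2)$ noted right after Definition~\ref{def2}. Combining this with Theorem~\ref{der2}, any $\gamma\in\Gamma(\mathcal{W}_2)$ must be of the form $\gamma=\lambda\,\mathrm{id}$ for some $\lambda\in\mathbb{K}$. This immediately gives the inclusion $\Gamma(\mathcal{W}_2)\subseteq\{\lambda\,\mathrm{id}:\lambda\in\mathbb{K}\}$.

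For the reverse inclusion, I would simply verify that each $\lambda\,\mathrm{id}$ satisfies the defining identity $\gamma(x.y)=\gamma(x).y=x.\gamma(y)$. Since $(\lambda\,\mathrm{id})(x.y)=\lambda(x.y)=(\lambda x).y=x.(\lambda y)$ holds for all $x,y\in\mathcal{W}_2$ by bilinearity of the multiplication, every scalar multiple of the identity is indeed in the centroid. Hence $\Gamma(\mathcal{W}_2)=\{\lambda\,\mathrm{id}:\lambda\in\mathbb{K}\}$.

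There is essentially no obstacle here: the entire content is packaged in Theorem~\ref{der2}, and the corollary is a one-line deduction plus a trivial verification. The only point requiring a moment's care is making sure the inclusion $\Gamma\subseteq\Delta$ is invoked correctly — but this is exactly the elementary observation recorded in the text after Definition~\ref{def2}, so it may be cited without further comment.
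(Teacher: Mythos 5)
Your proposal is correct and follows exactly the route the paper intends: the inclusion $\Gamma(\mathcal{W}_2)\subseteq\Delta(\mathcal{W}_2)$ noted after Definition~\ref{def2} combined with Theorem~\ref{der2} gives one direction, and the trivial verification by bilinearity gives the other. This is the same (implicit) argument the paper uses for this corollary, so nothing further is needed.
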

We stat now the characterization of local and $2$-local  $\frac{1}{2}$-derivation of $\mathcal{W}_2$.
\begin{theorem}\label{loc2}
Every local $\frac{1}{2}$-derivation of $\mathcal{W}_2$ is a $\frac{1}{2}$-derivation.
\end{theorem}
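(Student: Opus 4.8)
The plan is to mimic, in the setting of $\mathcal{W}_2$, the argument already used for $\mathcal{W}(2)$ in Theorem~\ref{loc}, now invoking the classification of $\frac{1}{2}$-derivations of $\mathcal{W}_2$ furnished by Theorem~\ref{der2}. Let $D$ be a local $\frac{1}{2}$-derivation of $\mathcal{W}_2$. By definition, for every $x \in \mathcal{W}_2$ there is a $\frac{1}{2}$-derivation $d_x$ with $D(x) = d_x(x)$; by Theorem~\ref{der2} we have $d_x = \lambda_x\,\mathrm{id}$ for some scalar $\lambda_x \in \mathbb{K}$, so that $D(x) = \lambda_x\, x$ for each $x \in \mathcal{W}_2$.

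First I would specialize this to the basis vectors, obtaining $D(\xi_i) = \lambda_{\xi_i}\,\xi_i$ for $i = 1,\dots,6$. The key observation is that $D$ is linear, so for any $i \neq j$ one has $D(\xi_i + \xi_j) = D(\xi_i) + D(\xi_j)$. Writing also $D(\xi_i + \xi_j) = \lambda_{\xi_i+\xi_j}(\xi_i + \xi_j)$ and comparing coefficients in the basis $\{\xi_1,\dots,\xi_6\}$ (here we use that $\xi_i$ and $\xi_j$ are linearly independent), we deduce $\lambda_{\xi_i} = \lambda_{\xi_i+\xi_j} = \lambda_{\xi_j}$. Hence all the scalars $\lambda_{\xi_1},\dots,\lambda_{\xi_6}$ equal a common value $\lambda \in \mathbb{K}$, so $D(\xi_i) = \lambda\,\xi_i$ for every $i$, i.e. $D = \lambda\,\mathrm{id}$. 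Since $\lambda\,\mathrm{id}$ is of the form described in Theorem~\ref{der2}, it is a $\frac{1}{2}$-derivation, and therefore so is $D$.

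I do not expect any real obstacle here: all the rigidity is already packaged into Theorem~\ref{der2}, which forces every $\frac{1}{2}$-derivation of $\mathcal{W}_2$ to be a homothety; once that is in hand, the passage from ``local'' to ``global'' is only the elementary bookkeeping above. The single point to be careful about is a wording issue: the definition of local $\frac{1}{2}$-derivation as stated refers to a ``derivation'' $d_x$ rather than a ``$\frac{1}{2}$-derivation''; consistently with the proof of Theorem~\ref{loc}, I read it as meaning a $\frac{1}{2}$-derivation, so that Theorem~\ref{der2} applies verbatim. (If one instead insisted on genuine derivations, the same conclusion would follow a fortiori, since the derivations of $\mathcal{W}_2$ — computable from its multiplication table exactly as in Lemma~\ref{lem1} — also leave the argument intact.)
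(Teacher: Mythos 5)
Your proof is correct and follows essentially the same route as the paper's: apply Theorem~\ref{der2} to get $D(x)=\lambda_x x$, specialize to the basis $\{\xi_1,\dots,\xi_6\}$, and use linearity of $D$ together with linear independence to force all the scalars $\lambda_{\xi_i}$ to coincide, so that $D=\lambda\,\mathrm{id}$ is a $\frac{1}{2}$-derivation. Your added remark reading ``derivation'' in the definition as ``$\frac{1}{2}$-derivation'' matches the paper's intended usage, and your explicit coefficient-comparison step only makes the paper's argument slightly more detailed.
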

\begin{proof}

Let $D$ be a local $\frac{1}{2}$-derivation, for all $x \in \mathcal{W}_2$ there exists a scalar $\lambda_x$ such that $D(x)=\lambda_x x$. \\
Taking, $x=\xi_1,\dots,\xi_6$ we have $D(\xi_i)=\lambda_{\xi_i} \xi_i$, then it sufficient to show that $\lambda_{\xi_i}=\lambda_{\xi_j}$ for $i,j=1,\dots,6$.\\
Since $D$ is linear, then $D(\xi_i+\xi_j)=D(\xi_i)+D(\xi_j)$. Which implies that $\lambda_{\xi_i}=\lambda_{\xi_i+\xi_j}=\lambda_{\xi_j}$. So $D(\xi_i)=\lambda \xi_i$ for all $i=1,\dots,6$. Therefor, by theorem\ref{der2}, $D$ is a $\frac{1}{2}$-derivation of $\mathcal{W}_2$. This ends the proof.
\end{proof}
\begin{theorem}\label{loc}
Every $2$-local $\frac{1}{2}$-derivation of $\mathcal{W}_2$ is a $\frac{1}{2}$-derivation.
\end{theorem}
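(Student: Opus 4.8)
The plan is to mimic the proof of the previous $2$-local theorem for $\mathcal{W}(2)$, using Theorem~\ref{der2} as the structural input in place of Theorem~\ref{der}. The whole argument hinges on one fact: every $\frac{1}{2}$-derivation of $\mathcal{W}_2$ is a scalar multiple of the identity. Since the identity maps $\xi_1$ to $\xi_1$, knowing the value of a scalar $\frac{1}{2}$-derivation on the single element $\xi_1$ pins down the scalar, hence the whole map. That rigidity is exactly what makes the $2$-local hypothesis collapse to the global one.

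\textbf{Step 1.} Let $\Delta$ be an arbitrary $2$-local $\frac{1}{2}$-derivation of $\mathcal{W}_2$. For each $a\in\mathcal{W}_2$, apply the defining property to the pair $(a,\xi_1)$: there is a $\frac{1}{2}$-derivation $D_{a,\xi_1}$ with $\Delta(a)=D_{a,\xi_1}(a)$ and $\Delta(\xi_1)=D_{a,\xi_1}(\xi_1)$. By Theorem~\ref{der2}, $D_{a,\xi_1}=\lambda^{a,\xi_1}\,\mathrm{id}$ for some scalar $\lambda^{a,\xi_1}\in\mathbb{K}$.

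\textbf{Step 2.} Show the scalar does not depend on $a$. Evaluating the second identity at $\xi_1$ gives $\Delta(\xi_1)=\lambda^{a,\xi_1}\xi_1$ for every $a$. Hence, for any two elements $a,b\in\mathcal{W}_2$, comparing $\lambda^{a,\xi_1}\xi_1=\Delta(\xi_1)=\lambda^{b,\xi_1}\xi_1$ and using $\xi_1\neq 0$ yields $\lambda^{a,\xi_1}=\lambda^{b,\xi_1}=:\lambda$. Therefore $\Delta(a)=D_{a,\xi_1}(a)=\lambda a$ for all $a\in\mathcal{W}_2$, so $\Delta$ is the linear map $\lambda\,\mathrm{id}$, which is a $\frac{1}{2}$-derivation by Theorem~\ref{der2} (or directly from Definition~\ref{def1}). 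This proves the claim.

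\textbf{Potential obstacle.} The argument is essentially free once Theorem~\ref{der2} is in hand, so there is no real analytic difficulty; the only thing to be careful about is the degenerate case where $\Delta$ might a priori be something pathological. But the choice of $\xi_1$ as the common ``test element'' is what removes all freedom: it is a nonzero vector on which $\mathrm{id}$ acts by a nonzero multiple, so the scalar is genuinely determined. (Any fixed nonzero vector would do equally well; $\xi_1$ is just convenient.) Thus the proof is short and the write-up can follow the $\mathcal{W}(2)$ case verbatim with the obvious substitutions $e_i\rightsquigarrow\xi_i$ and Theorem~\ref{der}$\rightsquigarrow$Theorem~\ref{der2}.
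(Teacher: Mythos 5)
Your proposal is correct and follows essentially the same route as the paper: fix $\xi_1$ as the common test point, invoke Theorem~\ref{der2} to write each $D_{a,\xi_1}$ as $\lambda^{a,\xi_1}\,\mathrm{id}$, and use $\Delta(\xi_1)=\lambda^{a,\xi_1}\xi_1$ with $\xi_1\neq 0$ to see the scalar is independent of $a$, so $\Delta=\lambda\,\mathrm{id}$ is a $\frac{1}{2}$-derivation. No gaps; your write-up is in fact cleaner than the paper's (which contains superscript typos such as $\lambda^{a,\xi_i}$).
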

\begin{proof}
Let $\Delta$ be an arbitrary 2-local $\frac{1}{2}$-derivation of $\mathcal{S}_2$. Then, by definition, for any element $a,\in \mathcal{W}_2$, there exists a $\frac{1}{2}$-derivation $D_{a,e_1}$ of $\mathcal{W}_2$ such that 
$$\Delta(a)=D_{a,\xi_1}(a), \, \mbox{and} \, \Delta(\xi_1)=D_{a,\xi_1}(\xi_1).$$
By theorem\ref{der2}, there exists a scalar $\lambda^{a,\xi_1}$ such that $D_{a,\xi_1}=\lambda^{a,\xi_i}id$.\\
Let $b$ be an arbitrary element in $\mathcal{W}_2$. Then, there exists a $\frac{1}{2}$-derivation of $\mathcal{W}_2$ such that 
$$\Delta(b)=D_{b,\xi_1}(b), \, \mbox{and} \, \Delta(\xi_1)=D_{b,\xi_1}(\xi_1).$$ 
By theorem\ref{der2}, there exists a scalar $\lambda^{b,\xi_1}$ such that $D_{b,\xi_1}=\lambda^{b,\xi_i}id$.\\ 
Since, $\Delta(\xi_1)=D_{a,\xi_1}(\xi_1)=D_{b,\xi_1}(\xi_1)$, we have
$$\lambda^{a,\xi_1}=\lambda^{b,\xi_1}$$
That it 
$$D_{a,\xi_1}=D_{b,\xi_1}$$
Therefore, for any $a \in \mathcal{W}_2$
$$\Delta(a)=D_{b,\xi_1}(a)=\lambda^{b,\xi_1}a,$$
that is $D_{b,\xi_1}$ does not depend on $a$. Hence, $\Delta$ is a $\frac{1}{2}$-derivation. This ends the proof.
\end{proof}
\subsection{ Biderivations of $\mathcal{W}_2$}
Our main tool for study of biderivations of the algebras $\mathcal{W}(2)$  is the following lemma [\cite{Kayg1}, Theorem 8], where the matrix of a derivation is calculated in the new basis $\lbrace \xi_1,\xi_2,\dots,\xi_6 \rbrace$.  
\begin{lemma} \cite{Kayg1}
\label{lem2}
Let $d$ be a derivation of $\mathcal{W}(2)$. Then, the matrix $D_d$ of $d$ is of the form:
\begin{center}
$D_{d}= \begin{pmatrix}
		0 & a &0 &0&0&0\\
		0 & b &2a &0&0&0\\
		0 & 0 &2b &0&0&0\\
		-a & 0 &0 &-b&a&0\\
		0 & -a &0 &0&0&2a\\
		0 & 0 &-a &0&0&b
		\end{pmatrix}. $
\end{center}
where $a,b \in \mathbb{F}$.
\end{lemma}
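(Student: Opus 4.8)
The plan is to prove the lemma by the same direct method used for Theorem \ref{der2}: expand a derivation in the basis, impose the Leibniz identity on enough products $\xi_i\cdot\xi_j$, and solve the resulting linear system. Although the statement names $\mathcal{W}(2)$, the displayed matrix is $6\times 6$, so the algebra in question is the terminal algebra $\mathcal{W}_2$ with basis $\{\xi_1,\dots,\xi_6\}$ and the multiplication table recorded above; I work throughout with that table. Writing $d(\xi_j)=\sum_{i=1}^{6}a_{ij}\xi_i$, the goal is to show that all but a handful of the $36$ coefficients vanish and that the survivors are governed by two free scalars, which I will name $a:=a_{12}$ and $b:=a_{22}$.

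First I would feed the ``diagonal-type'' products into the derivation identity
$$d(\xi_i\cdot\xi_j)=d(\xi_i)\cdot\xi_j+\xi_i\cdot d(\xi_j).$$
Applying it to $\xi_1\cdot\xi_1=-\xi_1$ and comparing coefficients immediately forces $a_{11}=a_{31}=a_{61}=0$ while leaving $\xi_2,\xi_4,\xi_5$ undetermined in the first column; continuing with $\xi_2\cdot\xi_1=-\xi_2$, $\xi_1\cdot\xi_3=\xi_3$, $\xi_1\cdot\xi_4=-2\xi_4$ and $\xi_1\cdot\xi_5=-\xi_5$ pins down most of columns $1$--$5$. A useful structural shortcut is that the third and sixth rows of the table vanish, so $\xi_3$ and $\xi_6$ are left annihilators; this kills many terms on the right-hand side and makes the mixed products (e.g.\ $\xi_2\cdot\xi_4=\xi_1-\xi_5$, $\xi_4\cdot\xi_3=-\xi_2+\xi_6$, $\xi_2\cdot\xi_5=\xi_2-2\xi_6$, $\xi_5\cdot\xi_6=-\xi_6$) especially cheap to evaluate. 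Each such product yields a small batch of linear equations that either sets a coefficient to zero or ties it to $a$ or $b$, with the factors of $2$ that appear in the target matrix emerging from the coefficients $-2\xi_4$, $-2\xi_3$, $-2\xi_6$ in the table.

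After processing the products above, I expect the system to collapse to exactly the stated columns: $d(\xi_1)=-a\xi_4$, $d(\xi_2)=a\xi_1+b\xi_2-a\xi_5$, $d(\xi_3)=2a\xi_2+2b\xi_3-a\xi_6$, $d(\xi_4)=-b\xi_4$, $d(\xi_5)=a\xi_4$, and $d(\xi_6)=2a\xi_5+b\xi_6$, i.e.\ precisely the matrix $D_d$. To close the argument I would then run the sufficiency check: substitute this two-parameter family back into the Leibniz identity for every basis pair and confirm it holds, so that the displayed matrices are exactly the derivations and not merely a necessary form.

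The main obstacle is not conceptual but combinatorial: there are many ordered pairs to test and the coefficients must be matched component by component, so the real work is organizing the computation to guarantee that no relation is overlooked. The sparsity of the table---two vanishing rows and a majority of zero or single-term products---keeps this manageable, and I would exploit it by ordering the products so that the cheapest (those involving a left annihilator or a scalar multiple of a single $\xi_k$) are cleared first, leaving only a few genuinely coupled equations relating $a$ and $b$.
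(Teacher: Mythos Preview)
The paper does not actually prove this lemma: it is quoted verbatim from \cite{Kayg1} (Theorem~8 there), so there is no in-paper argument to compare against. Your direct approach---writing $d(\xi_j)=\sum_i a_{ij}\xi_i$, imposing the Leibniz identity on a well-chosen list of products, and solving the resulting linear system---is exactly the standard way to obtain such a result from scratch, and your spot computations (e.g.\ the consequences of $\xi_1\cdot\xi_1=-\xi_1$, the use of the left-annihilator rows for $\xi_3,\xi_6$) are correct. Your reading of the $6\times 6$ matrix as pertaining to $\mathcal{W}_2$ rather than $\mathcal{W}(2)$ is also right; the lemma's statement contains a typo. The only thing to add is that your outline is a sketch rather than a full verification: to make it a complete proof you must record enough products to determine all $36$ coefficients (your list is close but not exhaustive---for instance you will still need a relation fixing $a_{21}$ and $a_{51}$ in the first column) and then carry out the sufficiency check you promise at the end.
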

Let $\delta$ be a  biderivation of  $\mathcal{W}_2$ and $x,y \in \mathcal{W}_2$, such that $x=\sum_{i=1}^{6}x_{i}\xi_{i}$ and $y=\sum_{i=1}^{6} y_{i}\xi_{i}$. Then, by the bilinearity of $\delta^-$, we obtain, 
$$\delta(x,y)=\sum_{i=1}^{6} \sum_{j=1}^{6} x_{i} y_{j} \delta(\xi_i,\xi_j)=
\sum_{i=1}^{6} \sum_{j=1}^{6} x_{i} y_{j} \delta_{\xi_i}(\xi_j).$$ 
By Theorem \ref{lem2}, the matrix $D_{\xi_i}$ of $\delta_{\xi_i}$, for $i=1,2,\dots,6$ in the basis $\lbrace \xi_1,\xi_2,\dots,\xi_6 \rbrace$ is of the form
\begin{equation}
\label{eq2}
D_{\xi_i}= \begin{pmatrix}
		0 & a_i &0 &0&0&0\\
		0 & b_i &2a_i &0&0&0\\
		0 & 0 &2b_i &0&0&0\\
		-a_i & 0 &0 &-b_i&a_i&0\\
		0 & -a_i &0 &0&0&2a_i\\
		0 & 0 &-a_i &0&0&b_i
		\end{pmatrix}. 
\end{equation}
We have the following theorem which characterize skew-biderivation of $\mathcal{W}_2$.
\begin{theorem}
\label{pro2}
The algebra $\mathcal{W}_2$ has no nontrivial skew-symmetric biderivation.
\end{theorem}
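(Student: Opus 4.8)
Here is how I would prove the statement, following the template of Proposition \ref{pro1}.

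The plan is to reduce everything to linear algebra on the structure constants of the partial maps. Let $\delta^{-}$ be a skew-symmetric biderivation of $\mathcal{W}_2$. For each fixed $i$ the partial map $\delta^{-}_{\xi_i}=\delta^{-}(\xi_i,\cdot)$ is a derivation of $\mathcal{W}_2$, so by Lemma \ref{lem2} its matrix $D_{\xi_i}$ in the basis $\lbrace\xi_1,\dots,\xi_6\rbrace$ has the form (\ref{eq2}), with scalars $a_i,b_i\in\mathbb{F}$; thus $\delta^{-}$ is encoded by the twelve numbers $a_1,b_1,\dots,a_6,b_6$, and by bilinearity $\delta^{-}(x,y)=\sum_{i,j}x_iy_j\,\delta^{-}_{\xi_i}(\xi_j)$, so it suffices to show all these numbers vanish.

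First I would exploit skew-symmetry on the diagonal: $\delta^{-}(\xi_i,\xi_i)=\delta^{-}_{\xi_i}(\xi_i)=0$ says precisely that the $i$-th column of $D_{\xi_i}$ is zero. Reading off the six columns of (\ref{eq2}) in turn (column $1$ gives $-a_i\xi_4$, column $2$ gives $a_i\xi_1+b_i\xi_2-a_i\xi_5$, column $3$ gives $2a_i\xi_2+2b_i\xi_3-a_i\xi_6$, column $4$ gives $-b_i\xi_4$, column $5$ gives $a_i\xi_4$, column $6$ gives $2a_i\xi_5+b_i\xi_6$), the diagonal conditions for $i=1,\dots,6$ force
$$a_1=a_2=b_2=a_3=b_3=b_4=a_5=a_6=b_6=0,$$
so the only parameters that can still be nonzero are $b_1$, $a_4$ and $b_5$, and in particular $D_{\xi_2}=0$ already.

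Next I would use the off-diagonal relations $\delta^{-}(\xi_i,\xi_2)=-\delta^{-}(\xi_2,\xi_i)$ for $i=1,4,5$. Since $\delta^{-}_{\xi_2}=0$, the right-hand side vanishes, so each of these reduces to $\delta^{-}_{\xi_i}(\xi_2)=0$, i.e.\ the second column of $D_{\xi_i}$ is zero; after the reductions already made, that column equals $b_1\xi_2$ for $i=1$, equals $a_4(\xi_1-\xi_5)$ for $i=4$, and equals $b_5\xi_2$ for $i=5$, hence $b_1=a_4=b_5=0$. Therefore every $D_{\xi_i}$ is the zero matrix and $\delta^{-}\equiv0$. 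I do not expect any genuine obstacle: the argument is routine, and the only care needed is bookkeeping — extracting the correct columns of (\ref{eq2}) and using the diagonal conditions first so that $D_{\xi_2}=0$ is available when the $(\xi_i,\xi_2)$ relations are applied, exactly as in the proof of Proposition \ref{pro1}.
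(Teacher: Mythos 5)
Your proof is correct and follows essentially the same route as the paper: you apply Lemma \ref{lem2} to each partial map $\delta^-_{\xi_i}$, use the diagonal conditions $\delta^-(\xi_i,\xi_i)=0$ to kill $a_1,a_2,b_2,a_3,b_3,b_4,a_5,a_6,b_6$, and then the relations $\delta^-(\xi_i,\xi_2)=-\delta^-(\xi_2,\xi_i)$ for $i=1,4,5$ to get $b_1=a_4=b_5=0$. Your version is in fact slightly more explicit than the paper's, since you write out the relevant columns of (\ref{eq2}) and note that $D_{\xi_2}=0$ makes the right-hand sides vanish.
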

\begin{proof}
Let $\delta^-$ be a skew-symmetric biderivation of $\mathcal{W}_2$. Then, for $i=1,\dots , 6$ the matrix of $\delta^-_{\xi_i}$ is of the form (\ref{eq2}).\\ 
Since $ \delta^-$ is skew-symmetric, then, $\delta^-(\xi_i,\xi_i)=0$ for $i=1,2,\dots,6$. Therefore,
$$a_1=a_2=b_2=a_3=b_3=b_4=a_5=a_6=b_6=0.$$
For $i=1,4,5$, the equality $\delta^-(\xi_i,\xi_2)=-\delta^-(\xi_2\xi_i)$ implies that
$$b_1=a_4=b_5=0.$$
That it, for $i=1, \dots,6$,
That it, for $i=1, \dots,8$. 
\begin{center}
$D_{\xi_i}= \begin{pmatrix}
		0 & 0 &0 &0&0&0\\
		0 & 0 &0 &0&0&0\\
		0 & 0 &0 &0&0&0\\
		0 & 0 &0 &0&0&0\\
		0 & 0 &0 &0&0&0\\
		0 & 0 &0 &0&0&0
		\end{pmatrix}. $
\end{center}
Therefore, $\delta^-(x,y)=0$ for all $x,y \in \mathcal{W}_2$. 
\end{proof}
We give in the following theorem the characterization of symmetric biderivation of $\mathcal{W}_2$. 
\begin{theorem}
\label{pro2}
The algebra  $\mathcal{W}_2$ has no nontrivial symmetric biderivation.
\end{theorem}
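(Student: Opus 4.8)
The plan is to run exactly the slicing argument used for the skew-symmetric case. First, fix a symmetric biderivation $\delta^+$ of $\mathcal{W}_2$. For each $i$ the linear map $\delta^+_{\xi_i}:=\delta^+(\xi_i,\cdot)$ satisfies $\delta^+_{\xi_i}(yz)=y\,\delta^+_{\xi_i}(z)+\delta^+_{\xi_i}(y)z$ by the second defining identity of a biderivation, hence it is a derivation of $\mathcal{W}_2$; by Lemma~\ref{lem2} its matrix in the basis $\{\xi_1,\dots,\xi_6\}$ has the shape $(\ref{eq2})$ for some scalars $a_i,b_i\in\mathbb{F}$. Thus $\delta^+$ is encoded by the twelve numbers $a_1,b_1,\dots,a_6,b_6$, and $\delta^+(\xi_i,\xi_j)$ is read off as the $j$-th column of $D_{\xi_i}$.

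Next I would impose the symmetry condition $\delta^+(\xi_i,\xi_j)=\delta^+(\xi_j,\xi_i)$ only on the pairs involving $\xi_2$. Reading columns of $(\ref{eq2})$ one gets $\delta^+(\xi_i,\xi_2)=a_i\xi_1+b_i\xi_2-a_i\xi_5$, while $\delta^+(\xi_2,\xi_i)$ is the $i$-th column of $D_{\xi_2}$, which for $i\in\{1,3,4,5,6\}$ involves only $a_2,b_2$ and lies in $\spa\{\xi_2,\xi_3,\xi_4,\xi_5,\xi_6\}$. Comparing the two expressions: for $i=1$ this forces $a_1=b_1=a_2=0$; for $i=3$ it forces $a_3=b_3=b_2=0$; for $i=4$ it forces $a_4=b_4=0$; for $i=5$ it forces $a_5=b_5=0$; and for $i=6$ it forces $a_6=b_6=0$. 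Hence all twelve scalars vanish, every $D_{\xi_i}$ is the zero matrix, and therefore $\delta^+(x,y)=\sum_{i,j}x_iy_j\,\delta^+_{\xi_i}(\xi_j)=0$ for all $x,y\in\mathcal{W}_2$.

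There is no conceptual obstacle: the whole argument is linear bookkeeping. The only point needing care is to check that the five pairs $(\xi_i,\xi_2)$ with $i\in\{1,3,4,5,6\}$ already yield a system of rank $12$ in $a_1,b_1,\dots,a_6,b_6$, so that no additional pair (in particular the vacuous diagonal constraints $\delta^+(\xi_i,\xi_i)=\delta^+(\xi_i,\xi_i)$) is required. Combining this with the preceding theorem, that $\mathcal{W}_2$ has no nontrivial skew-symmetric biderivation, and the decomposition $BDer(\mathcal{W}_2)=BDer_+(\mathcal{W}_2)\oplus BDer_-(\mathcal{W}_2)$, one concludes $BDer(\mathcal{W}_2)=\{0\}$, the analogue for $\mathcal{W}_2$ of Theorem~\ref{thm2}.
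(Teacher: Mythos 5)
Your proposal is correct and follows essentially the same route as the paper: slice $\delta^+$ into the derivations $\delta^+_{\xi_i}$, invoke Lemma~\ref{lem2} to get the matrices of the form (\ref{eq2}), and impose symmetry on the pairs $(\xi_i,\xi_2)$ for $i\in\{1,3,4,5,6\}$, which forces all $a_i=b_i=0$ and hence $\delta^+=0$. Your column-by-column bookkeeping (and the observation that these five pairs alone give all twelve vanishing conditions) just makes explicit what the paper states in one line.
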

\begin{proof}
Let $\delta^+$ be a skew-symmetric biderivation of $\mathcal{W}_2$. Then, for $i=1,\dots,6$ the matrix of $\delta^+_{\xi_i}$ is of the form (\ref{eq2}).\\
From the equality $\delta^+(\xi_2,\xi_i)=\delta^+(\xi_i,\xi_2)$ for $i=1,3,4,5,6$, we obtain
$$a_i=b_i=0, \quad i=1,\dots,6.$$
Thus, for any $1,\dots,6$
\begin{center}
$D_{\xi_i}= \begin{pmatrix}
		0 & 0 &0 &0&0&0\\
		0 &0 &0 &0&0&0\\
		0 & 0 &0 &0&0&0\\
		0 & 0 &0 &0&0&0\\
		0 & 0 &0 &0&0&0\\
		0 & 0 &0 &0&0&0
		\end{pmatrix}. $
\end{center}
That is, $\delta^+(x,y)=0$, for all $x,y \in \mathcal{W}_2$.
\end{proof}
Our main result of this section is now stated and proved in the following theorem.
\begin{theorem}
\label{thm2}
 $\delta$ is a biderivation of $\mathcal{W}_2$ if and only if $\delta$ is a zero mapping. i.e 
$BDer(\mathcal{W}_2)=\lbrace 0 \rbrace$.
\end{theorem}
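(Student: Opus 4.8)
The plan is to reduce the statement to the two theorems just established, by means of the canonical decomposition of the space of biderivations into its symmetric and skew-symmetric parts. As recalled earlier for an arbitrary algebra $\mathcal{A}$, one has $BDer(\mathcal{A})=BDer_+(\mathcal{A})\oplus BDer_-(\mathcal{A})$, since any $\delta$ splits as $\delta=\frac{1}{2}(\delta^++\delta^-)$ with $\delta^+\in BDer_+(\mathcal{A})$ and $\delta^-\in BDer_-(\mathcal{A})$. Applying this to $\mathcal{A}=\mathcal{W}_2$ and invoking the preceding two theorems — that $\mathcal{W}_2$ has no nontrivial symmetric biderivation and no nontrivial skew-symmetric biderivation — we get $\delta^+=0$ and $\delta^-=0$, hence $\delta=0$. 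The converse (the zero map is trivially a biderivation) is immediate, so $BDer(\mathcal{W}_2)=\lbrace 0\rbrace$.

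Since the substance of the argument lies inside those two component theorems, I would also outline their common mechanism. Given a biderivation $\delta$, for each fixed $z\in\mathcal{W}_2$ the partial maps $x\mapsto\delta(x,z)$ and $y\mapsto\delta(z,y)$ are derivations of $\mathcal{W}_2$; hence, by Lemma~\ref{lem2}, the matrix of each $\delta_{\xi_i}$ in the basis $\lbrace\xi_1,\dots,\xi_6\rbrace$ has the form \eqref{eq2}, governed by scalars $a_i,b_i\in\mathbb{F}$. Writing $\delta(x,y)=\sum_{i,j}x_iy_j\,\delta_{\xi_i}(\xi_j)$ and imposing the two Leibniz identities of Definition~\ref{def3} on basis triples $(\xi_i,\xi_j,\xi_k)$ — together with the symmetry constraint $\delta(\xi_i,\xi_j)=\delta(\xi_j,\xi_i)$ in the symmetric case, and $\delta(\xi_i,\xi_j)=-\delta(\xi_j,\xi_i)$ in the skew-symmetric case — yields a finite linear system in the $a_i,b_i$. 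A handful of these relations, most efficiently those coming from pairs involving $\xi_2$ and from the pair $(\xi_3,\xi_4)$, already forces all $a_i=b_i=0$, so every partial map $\delta_{\xi_i}$ vanishes and therefore $\delta=0$.

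I expect the only real obstacle to be bookkeeping: one must correctly read off from the multiplication table of $\mathcal{W}_2$ which products vanish — in particular the $\xi_3$- and $\xi_6$-rows and columns are (almost) entirely zero, so they contribute no usable constraints and must not be relied upon — and then solve the resulting linear system without sign errors. Since this computation is already carried out in the two component theorems, the proof of the present theorem follows at once from the direct sum decomposition $BDer(\mathcal{W}_2)=BDer_+(\mathcal{W}_2)\oplus BDer_-(\mathcal{W}_2)$.
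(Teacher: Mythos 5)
Your proposal is correct and matches the paper's argument: the theorem follows directly by combining the decomposition $BDer(\mathcal{W}_2)=BDer_+(\mathcal{W}_2)\oplus BDer_-(\mathcal{W}_2)$ with the two preceding theorems showing that both the symmetric and the skew-symmetric parts vanish, which is exactly how the paper (implicitly) proves it. Your sketch of the component theorems likewise follows the paper's method of using Lemma~\ref{lem2} and the (skew-)symmetry constraints on the matrices $D_{\xi_i}$.
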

\section{Terminal subalgebras $\mathcal{S}_2$ and $\mathcal{H}_1$ of the algebra $\mathcal{W}_2$.}
In \cite{Kayg1} et all show that Hence the algebra $\mathcal{H}_1$ has the
basis $\lbrace z_1,\dots , z_4\rbrace$ and the algebras $\mathcal{H}_1$ and $\mathcal{S}_2$ coincide with each other. The multiplication table of the terminal algebra \(\mathcal{S}_2\) is given by (see \cite{Kayg1}):

\[
\begin{array}{c|cccc}
\cdot & z_1 & z_2 & z_3 & z_4 \\
\hline
z_1 & -z_1 & z_2 & 3z_3 & -3z_4  \\
z_2 & -2z_2 & -3z_2 & 0 & z_1  \\
z_3 & 0 & 0 & 0 & 0  \\
z_4 & 3z_4 &-2z_1 & -z_2 & 0
\end{array}
\]

\subsection{$\frac{1}{2}$-derivations of $\mathcal{S}_2$}
In this section we characterize $\frac{1}{2}$-derivation and Local $\frac{1}{2}$-derivation (resp $2$-Local $\frac{1}{2}$derivation) of $\mathcal{S}_2$.
\begin{theorem}\label{der3}
Let $d:\mathcal{S}_2 \rightarrow \mathcal{S}_2$ be a linear map. Then $d$ is a $\frac{1}{2}$-derivation if and only if there exists a scalar $\lambda \in \mathbb{K}$ such that  
$$d(x)=\lambda x , \quad \forall x\in \mathcal{S}_2.$$
\end{theorem}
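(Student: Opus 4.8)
The plan is to mimic the proof of Theorems \ref{der} and \ref{der2}: write a $\frac{1}{2}$-derivation $d$ of $\mathcal{S}_2$ in the basis $\lbrace z_1,z_2,z_3,z_4\rbrace$ as $d(z_j)=\sum_{i=1}^{4}a_{ij}z_i$, and extract a linear system for the coefficients $a_{ij}$ by applying the $\frac{1}{2}$-derivation identity $d(z_i.z_j)=\frac{1}{2}(d(z_i).z_j+z_i.d(z_j))$ to well-chosen pairs of basis vectors, reading off products from the multiplication table of $\mathcal{S}_2$.

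First I would use the diagonal-type relations coming from products of the form $z_1.z_k$: since $z_1.z_1=-z_1$, $z_1.z_3=3z_3$, $z_1.z_4=-3z_4$ and $z_1.z_2=z_2$, each such equation forces most off-diagonal entries in the corresponding column to vanish and pins the diagonal entry to $a_{11}$ (up to the eigenvalue bookkeeping that already worked in the $\mathcal{W}(2)$ and $\mathcal{W}_2$ cases). Concretely, from $-d(z_1)=d(z_1.z_1)=\frac12(d(z_1).z_1+z_1.d(z_1))$ I expect to kill $a_{21},a_{31},a_{41}$; from $3d(z_3)=d(z_1.z_3)$ and $-3d(z_4)=d(z_1.z_4)$ I expect $a_{33}=a_{44}=a_{11}$ together with the vanishing of the other entries of columns $3$ and $4$; and from $d(z_2)=d(z_1.z_2)$ I expect to control column $2$. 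Then I would bring in a product not involving $z_1$ — for instance $z_4.z_2=-2z_1$ or $z_2.z_4=z_1$ or $z_4.z_3=-z_2$ — to force $a_{22}=a_{11}$ and eliminate any residual coefficients (the $z_2$-related ones in particular), exactly as the equalities $-d(e_2)=d(e_6.e_2)$ and $d(\xi_6-\xi_2)=d(\xi_4.\xi_3)$ did in the earlier proofs.

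Collecting all these constraints should yield $a_{11}=a_{22}=a_{33}=a_{44}=:\lambda$ and $a_{ij}=0$ for $i\neq j$, so that $d(z_i)=\lambda z_i$ for all $i$, i.e. $d=\lambda\,\mathrm{id}$; the converse is immediate since $\lambda\,\mathrm{id}$ clearly satisfies Definition \ref{def1}. The one point that requires a little care — and the only place I anticipate any friction — is checking that the chosen handful of basis-pair identities already exhausts the system, i.e. that no stray coefficient (plausibly among those attached to $z_2$, which appears on the right-hand side of several products) survives; if the first pass leaves something undetermined, one extra relation such as $z_4.z_1=3z_4$ or $z_2.z_1=-2z_2$ should close the gap. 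Aside from that, the argument is the same bookkeeping as in Theorems \ref{der} and \ref{der2}, just with a $4\times4$ coefficient matrix instead of $8\times8$ or $6\times6$.
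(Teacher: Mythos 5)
Your proposal is correct and takes essentially the same route as the paper's proof: expand $d(z_j)=\sum_i a_{ij}z_i$ and impose the $\frac{1}{2}$-derivation identity on products of basis vectors, chiefly those involving $z_1$. In fact the four identities coming from $z_1\cdot z_j$ ($j=1,\dots,4$) alone already force $a_{ij}=0$ for $i\neq j$ and $a_{22}=a_{33}=a_{44}=a_{11}$ (in particular $d(z_2)=d(z_1\cdot z_2)=\frac{1}{2}(d(z_1)\cdot z_2+z_1\cdot d(z_2))$ yields $a_{22}=a_{11}$), so the backup relations you keep in reserve, such as $z_4\cdot z_2=-2z_1$ or $z_2\cdot z_4=z_1$, are redundant; this is precisely what the paper does.
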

\begin{proof}
Let $d(z_j)=\sum a_{ij}z_i$ be a $\frac{1}{2}$-derivation of $\mathcal{S}_2$, where $a_{ij} \in \mathbb{K}$.\\
Since $-d(z_1)=d(z_1.z_1)=\frac{1}{2}(d(z_1).z_1+z_1.d(z_1))$, then $d(z_1)=a_{11}z_1$.\\
Since $d(z_2)=d(z_1.z_2)=\frac{1}{2}(d(z_1).z_2+z_1.d(z_2))$, then $d(z_2)=a_{11}z_2$.\\
Since $3d(z_3)=d(z_1.z_3)=\frac{1}{2}(d(z_1).z_3+z_1.d(z_3))$, then $d(z_3)=a_{11}z_3$.\\
Since $-3d(z_4)=d(z_1.z_4)=\frac{1}{2}(d(z_1).z_4+z_1.d(z_4))$, then $d(z_4)=a_{11}z_4$.
\end{proof}
\begin{corollary}
Let $\mathcal{S}_2$ be the terminal subalgebra of $\mathcal{W}_2$, then
$$\Gamma(\mathcal{W}_2)=\lbrace \lambda id \, :\, \lambda \in \mathbb{K} \rbrace.$$ 
\end{corollary}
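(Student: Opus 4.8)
The plan is to deduce this as an immediate corollary of Theorem~\ref{der3}, exactly parallel to the corresponding statements for $\mathcal{W}(2)$ and $\mathcal{W}_2$. The two ingredients are the general set-theoretic inclusion $\Gamma(\mathcal{A}) \subseteq \Delta(\mathcal{A})$ recorded after Definition~\ref{def2}, and the explicit description $\Delta(\mathcal{S}_2) = \{\lambda\,\mathrm{id} : \lambda \in \mathbb{K}\}$ just proved.

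First I would verify the inclusion $\{\lambda\,\mathrm{id} : \lambda \in \mathbb{K}\} \subseteq \Gamma(\mathcal{S}_2)$: for any $\lambda \in \mathbb{K}$ and any $x,y \in \mathcal{S}_2$ one has $(\lambda\,\mathrm{id})(x.y) = \lambda(x.y) = (\lambda x).y = x.(\lambda y)$, so $\lambda\,\mathrm{id}$ satisfies the defining identity of the centroid in Definition~\ref{def2}. This step is purely formal, using only bilinearity of the product, and does not touch the structure constants of $\mathcal{S}_2$. Conversely, if $\gamma \in \Gamma(\mathcal{S}_2)$, then since $\gamma(x).y = x.\gamma(y) = \gamma(x.y)$ we get $\frac{1}{2}(\gamma(x).y + x.\gamma(y)) = \gamma(x.y)$, i.e.\ $\gamma \in \Delta(\mathcal{S}_2)$; hence by Theorem~\ref{der3} there is a scalar $\lambda \in \mathbb{K}$ with $\gamma = \lambda\,\mathrm{id}$. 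Combining the two inclusions gives $\Gamma(\mathcal{S}_2) = \{\lambda\,\mathrm{id} : \lambda \in \mathbb{K}\}$.

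I do not expect any real obstacle here: the entire mathematical content is carried by Theorem~\ref{der3}, and the only hand computation — that scalar multiples of the identity genuinely lie in the centroid — is routine. The one thing I would flag is that the displayed statement of the corollary should read $\Gamma(\mathcal{S}_2)$ rather than $\Gamma(\mathcal{W}_2)$ on the right-hand side, since it is the centroid of $\mathcal{S}_2$ that is being computed.
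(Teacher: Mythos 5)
Your argument is correct and is exactly the intended one: the paper derives this corollary from Theorem~\ref{der3} together with the inclusion $\Gamma(\mathcal{A})\subseteq\Delta(\mathcal{A})$ noted after Definition~\ref{def2}, just as you do, and your verification that $\lambda\,\mathrm{id}$ lies in the centroid is the only computation needed. Your remark that the statement should read $\Gamma(\mathcal{S}_2)$ rather than $\Gamma(\mathcal{W}_2)$ is also correct; that is a typo in the paper.
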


Now, we are going to prove that any local (resp. $2$-local) $\frac{1}{2}$-derivation of $\mathcal{S}_2$ is a $\frac{1}{2}$-derivation.

\begin{theorem}
Every local $\frac{1}{2}$-derivation of  $\mathcal{S}_2$ is a $\frac{1}{2}$-derivation.
\end{theorem}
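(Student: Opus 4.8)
The plan is to repeat on the $4$-dimensional algebra $\mathcal{S}_2$ the strategy already used for the local $\frac{1}{2}$-derivation results on $\mathcal{W}(2)$ and $\mathcal{W}_2$ (Theorem~\ref{loc2}). Let $D\colon \mathcal{S}_2\to\mathcal{S}_2$ be a local $\frac{1}{2}$-derivation. First I would unpack the definition: for every $x\in\mathcal{S}_2$ there is a $\frac{1}{2}$-derivation $d_x$ of $\mathcal{S}_2$ with $D(x)=d_x(x)$. By Theorem~\ref{der3}, each such $d_x$ is a scalar operator, so there is $\lambda_x\in\mathbb{K}$ with $D(x)=\lambda_x x$ for all $x\in\mathcal{S}_2$.

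Next I would evaluate on the basis $\{z_1,z_2,z_3,z_4\}$, obtaining scalars $\lambda_{z_1},\dots,\lambda_{z_4}$ with $D(z_i)=\lambda_{z_i}z_i$. The one genuine step is to use the \emph{linearity} of $D$, which is part of the hypothesis: from $D(z_i+z_j)=D(z_i)+D(z_j)$ we get $\lambda_{z_i+z_j}(z_i+z_j)=\lambda_{z_i}z_i+\lambda_{z_j}z_j$, and linear independence of $z_i$ and $z_j$ forces $\lambda_{z_i}=\lambda_{z_i+z_j}=\lambda_{z_j}$. Hence all four scalars agree with a common value $\lambda$, and by linearity $D(x)=\lambda x$ for every $x\in\mathcal{S}_2$.

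Finally, $D=\lambda\,\mathrm{id}$ is a $\frac{1}{2}$-derivation by the ``if'' direction of Theorem~\ref{der3}, which finishes the argument. I do not expect a real obstacle here: everything is driven by Theorem~\ref{der3}, which identifies $\Delta(\mathcal{S}_2)$ with the scalar operators, and the only point needing a little care is that $D$ is assumed linear, so the passage from its pointwise scalars $\lambda_x$ to one global scalar is immediate. If a basis-free phrasing is preferred, one observes that for nonzero $x$ and any $y\notin\mathbb{K}x$, comparing $D(x+y)$ with $D(x)+D(y)$ yields $\lambda_x=\lambda_y$, while vectors proportional to $x$ are handled trivially.
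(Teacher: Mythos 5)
Your argument is correct and follows the same route as the paper's own proof: invoke Theorem~\ref{der3} to write $D(x)=\lambda_x x$, compare $D(z_i+z_j)$ with $D(z_i)+D(z_j)$ using linear independence to get a single scalar $\lambda$, and conclude $D=\lambda\,\mathrm{id}$ is a $\frac{1}{2}$-derivation. No issues to report.
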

\begin{proof}

Let $D$ be a local $\frac{1}{2}$-derivation, for all $x \in \mathcal{S}_2$ there exists a scalar $\lambda_x$ such that $D(x)=\lambda_x x$. \\
Taking, $x=z_1,\dots,z_4$ we have $D(z_i)=\lambda_{z_i} z_i$, then it sufficient to show that $\lambda_{z_i}=\lambda_{z_j}$ for $i,j=1,\dots,4$.\\
Since $D$ is linear, then $D(z_i+z_j)=D(z_i)+D(z_j)$. Which implies that $\lambda_{z_i}=\lambda_{z_i+z_j}=\lambda_{z_j}$. So $D(z_i)=\lambda z_i$ for all $i=1,\dots,4$. Therefor, by theorem\ref{der3}, $D$ is a $\frac{1}{2}$-derivation of $\mathcal{S}_2$. This ends the proof.
\end{proof}

\begin{theorem}
Every 2-Local $\frac{1}{2}$-derivation of  $\mathcal{S}_2$ is a $\frac{1}{2}$-derivation.
\end{theorem}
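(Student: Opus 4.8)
The plan is to follow the same strategy already used for $\mathcal{W}(2)$ and $\mathcal{W}_2$, exploiting Theorem \ref{der3}, which tells us that every $\frac{1}{2}$-derivation of $\mathcal{S}_2$ is a scalar multiple of the identity. So let $\Delta$ be an arbitrary $2$-local $\frac{1}{2}$-derivation of $\mathcal{S}_2$. First I would fix the anchor element $z_1$ and, for each $a\in\mathcal{S}_2$, apply the definition to the pair $(a,z_1)$: this produces a $\frac{1}{2}$-derivation $D_{a,z_1}$ with $\Delta(a)=D_{a,z_1}(a)$ and $\Delta(z_1)=D_{a,z_1}(z_1)$. By Theorem \ref{der3} there is a scalar $\lambda^{a}\in\mathbb{K}$ with $D_{a,z_1}=\lambda^{a}\,\mathrm{id}$, hence $\Delta(a)=\lambda^{a}a$ and $\Delta(z_1)=\lambda^{a}z_1$.

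The second step is to show that $\lambda^{a}$ is in fact independent of $a$. Pick another element $b\in\mathcal{S}_2$ and repeat the construction with the pair $(b,z_1)$ to get $D_{b,z_1}=\lambda^{b}\,\mathrm{id}$. Evaluating at the anchor gives
$$\lambda^{a}z_1=D_{a,z_1}(z_1)=\Delta(z_1)=D_{b,z_1}(z_1)=\lambda^{b}z_1,$$
and since $z_1$ is a nonzero basis vector of $\mathcal{S}_2$ this forces $\lambda^{a}=\lambda^{b}=:\lambda$. Consequently, for every $a\in\mathcal{S}_2$ we have $\Delta(a)=\lambda a$, so $\Delta=\lambda\,\mathrm{id}$ is linear, and it is a $\frac{1}{2}$-derivation by Theorem \ref{der3} (or directly from Definition \ref{def1}).

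The argument is entirely routine and the only point needing any attention is the well-definedness in the second step, i.e. that the chosen anchor $z_1$ is not annihilated, so that the identity $\lambda^{a}z_1=\lambda^{b}z_1$ can be cancelled to give equality of the two scalars; this is immediate since $z_1$ is a basis element, so I do not expect any genuine obstacle, and the proof will be essentially identical to the ones already given for $\mathcal{W}(2)$ and $\mathcal{W}_2$.
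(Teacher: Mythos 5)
Your proposal is correct and follows essentially the same argument as the paper: anchor at $z_1$, use Theorem \ref{der3} to write each $D_{a,z_1}$ as a scalar multiple of the identity, and compare scalars via $\Delta(z_1)$ to conclude $\Delta=\lambda\,\mathrm{id}$. Your added remark that $z_1\neq 0$ justifies cancelling the scalar is a small point the paper leaves implicit, but it is the same proof.
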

\begin{proof}

Let $\Delta$ be an arbitrary 2-local $\frac{1}{2}$-derivation of $\mathcal{S}_2$. Then, by definition, for any element $a,\in \mathcal{S}_2$, there exists a $\frac{1}{2}$-derivation $D_{a,e_1}$ of $\mathcal{S}_2$ such that 
$$\Delta(a)=D_{a,e_1}(a), \, \mbox{and} \, \Delta(z_1)=D_{a,z_1}(z_1).$$
By theorem\ref{der3}, there exists a scalar $\lambda^{a,z_1}$ such that $D_{a,z_1}=\lambda^{a,z_i}id$.\\
Let $b$ be an arbitrary element in $\mathcal{S}_2$. Then, there exists a $\frac{1}{2}$-derivation of $\mathcal{S}_2$ such that 
$$\Delta(b)=D_{b,z_1}(b), \, \mbox{and} \, \Delta(z_1)=D_{b,z_1}(z_1).$$ 
By theorem\ref{der3}, there exists a scalar $\lambda^{b,z_1}$ such that $D_{b,z_1}=\lambda^{b,z_i}id$.\\ 
Since, $\Delta(z_1)=D_{a,z_1}(z_1)=D_{b,z_1}(z_1)$, we have
$$\lambda^{a,z_1}=\lambda^{b,z_1}$$
That it 
$$D_{a,z_1}=D_{b,z_1}$$
Therefore, for any $a \in \mathcal{S}_2$,
$$\Delta(a)=D_{b,z_1}(a)=\lambda^{b,z_1}a,$$
that is $D_{b,z_1}$ does not depend on $a$. Hence, $\Delta$ is a $\frac{1}{2}$-derivation. This ends the proof.
\end{proof}
\subsection{ Biderivations of $\mathcal{S}_2$}
Our main tool for study of biderivations of the algebras $\mathcal{S}_2$  is the following lemma [\cite{Kayg1}, Theorem 10], where the matrix of a derivation is calculated in the new basis $\lbrace z_1,z_2,z_3,z_4 \rbrace$.  
\begin{lemma} \cite{Kayg1}
\label{lem3}
Let $d$ be a derivation of $\mathcal{S}_2$. Then, the matrix $D_d$ of $d$ is of the form:
\begin{center}
$D_{d}= \begin{pmatrix}
		0 & 0 &0 &b\\
		-2b & a &0 &0\\
		0 & -3b &2a &0\\
		0 & 0 &0 &-a
		\end{pmatrix}. $
\end{center}
where $a,b \in \mathbb{F}$.
\end{lemma}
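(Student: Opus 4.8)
The statement is quoted from \cite{Kayg1}, so one option is simply to invoke it; for completeness, though, the plan is to reconstruct it by the same kind of finite linear-algebra computation used in the proofs of Theorems~\ref{der2} and~\ref{der3}. Fix the basis $\{z_1,z_2,z_3,z_4\}$ of $\mathcal{S}_2$, write an arbitrary linear map as $d(z_j)=\sum_{i=1}^{4}c_{ij}z_i$, so that $D_d=(c_{ij})$, impose the Leibniz identity $d(z_iz_j)=d(z_i)z_j+z_id(z_j)$ on all sixteen ordered pairs of basis vectors, and solve the resulting homogeneous linear system for the entries $c_{ij}$. Since $\mathcal{S}_2$ is four-dimensional this is a bounded computation; what matters is the order in which the products are exploited.

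The organizing device is multiplication by $z_1$. Reading off the table, left multiplication $L_{z_1}$ is diagonal in the basis $z_1,z_2,z_3,z_4$ with pairwise distinct eigenvalues, and the same is true of right multiplication $R_{z_1}$. The Leibniz identity applied to the products $z_1z_j$ and $z_jz_1$ is precisely the operator relation $[d,L_{z_1}]=L_{d(z_1)}$, respectively $[d,R_{z_1}]=R_{d(z_1)}$; comparing coefficients in these, and using that $L_{z_1}$ and $R_{z_1}$ separate the basis vectors into one-dimensional eigenspaces, forces most off-diagonal entries of $D_d$ to vanish, locates $d(z_1)$, and confines each $d(z_j)$ to a small coordinate subspace. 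I expect this step to do the bulk of the work.

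Then I would feed the remaining non-vacuous products — those not having $z_1$ as a factor, namely $z_2z_2$, $z_2z_4$, $z_4z_2$ and $z_4z_3$ — into the identity to tie the still-undetermined coefficients to one another and to fix the surviving diagonal entries, which collapses the solution space to the asserted two-parameter family, parametrised by the scalars $a$ and $b$. For the lemma to be usable in the study of biderivations of $\mathcal{S}_2$ it is also worth recording the converse: a one-line substitution shows that every matrix of that shape satisfies all the Leibniz relations, so the derivation algebra is exactly this family.

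The argument has no conceptual difficulty; the real hazard is bookkeeping — sign slips among the sixteen bilinear conditions, or overlooking a product that imposes an extra relation. The subtlety worth flagging is that $z_3$ sits in a degenerate position (all $z_3z_j$ vanish, and $z_4z_4=0$ as well), so several Leibniz conditions carry no information; one must check that the non-vacuous ones already pin down every entry of $D_d$, which is what the simple spectrum of $L_{z_1}$ (and $R_{z_1}$) guarantees.
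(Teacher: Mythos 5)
The paper itself offers no proof of this lemma: it is imported verbatim from \cite{Kayg1} (Theorem 10 there), so your first option --- simply invoking the citation --- is exactly what the paper does, and your reconstruction is additional work rather than a divergent route. The plan is sound and does go through: $L_{z_1}$ and $R_{z_1}$ are indeed diagonal in the basis $z_1,\dots ,z_4$ with simple spectra ($-1,1,3,-3$ and $-1,-2,0,3$ respectively); the Leibniz identities on pairs containing $z_1$ are equivalent to $[d,L_{z_1}]=L_{d(z_1)}$ and $[d,R_{z_1}]=R_{d(z_1)}$, and comparing matrix entries kills every off-diagonal coefficient $c_{ij}$ of $D_d$ except $c_{21},c_{32},c_{14}$, which get tied together by $c_{32}=\tfrac32 c_{21}$ and $c_{14}=-\tfrac12 c_{21}$, while leaving $c_{22},c_{33},c_{44}$ free; the remaining products $z_2z_2$ and $z_2z_4$ then force $c_{33}=2c_{22}$ and $c_{44}=-c_{22}$, which is exactly the stated two-parameter family, and the converse substitution indeed verifies it.

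Two caveats are worth recording. First, if you run the computation against the table as printed in this paper, the entry $z_2\cdot z_2=-3z_2$ is a misprint (it should be $-3z_3$, as in \cite{Kayg1}): with the printed entry, the Leibniz identity on the pair $(z_1,z_2)$ forces $b=0$, so a literal verification would appear to refute the lemma rather than confirm it. Second, your remark that the relations coming from $z_3$ (and $z_4z_4$) ``carry no information'' is not accurate a priori --- a vanishing product $xy=0$ still imposes the nontrivial condition $d(x)y+xd(y)=0$ --- and the simple spectrum of $L_{z_1},R_{z_1}$ alone does not pin everything down (after the $z_1$-relations a four-parameter family survives). Your argument is nonetheless unaffected, because using only a subset of the Leibniz conditions can only enlarge the solution space, that subset already cuts it down to the asserted family, and your concluding converse check covers the omitted conditions.
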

Let $\delta$ be a  biderivation of  $\mathcal{S}_2$ and $x,y \in \mathcal{S}_2$, such that $x=\sum_{i=1}^{4}x_{i}z_{i}$ and $y=\sum_{i=1}^{4} y_{i}z_{i}$. Then, by the bilinearity of $\delta$, we obtain, 
$$\delta(x,y)=\sum_{i=1}^{4} \sum_{j=1}^{4} x_{i} y_{j} \delta(z_i,z_j)=
\sum_{i=1}^{4} \sum_{j=1}^{4} x_{i} y_{j} \delta_{z_i}(z_j).$$ 
By Theorem \ref{lem3}, the matrix $D_{z_i}$ of $\delta_{\xi_i}$, for $i=1,\dots,4$ in the basis $\lbrace z_1,z_2,z_3,z_4 \rbrace$ is of the form
\begin{equation}
\label{eq3}
D_{d}= \begin{pmatrix}
		0 & 0 &0 &b_i\\
		-2b_i & a_i &0 &0\\
		0 & -3b_i &2a_i &0\\
		0 & 0 &0 &-a_i
		\end{pmatrix}. 
\end{equation}
Now, we characterize skew-symmetric biderivation of $\mathcal{S}_2$.
\begin{theorem}
\label{pro1}
The algebra $\mathcal{S}_2$ has no nontrivial skew-symmetric biderivation.
\end{theorem}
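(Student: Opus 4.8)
The plan is to mimic the proofs of Proposition \ref{pro1} and Theorem \ref{pro2}, exploiting the rigid shape of a derivation's matrix given in Lemma \ref{lem3}. First I would fix a skew-symmetric biderivation $\delta^-$ of $\mathcal{S}_2$. For each basis element $z_i$ the left-multiplication map $\delta^-_{z_i}:=\delta^-(z_i,-)$ is a derivation (this is exactly the defining identity $\delta(x,yz)=y\delta(x,z)+\delta(x,y)z$ with $x=z_i$), so by Lemma \ref{lem3} its matrix $D_{z_i}$ has the form in \eqref{eq3} with parameters $a_i,b_i\in\mathbb{F}$, $i=1,\dots,4$. Thus $\delta^-$ is determined by the eight scalars $a_1,b_1,\dots,a_4,b_4$, and the whole task is to show these all vanish.

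Next I would impose skew-symmetry. The condition $\delta^-(z_i,z_i)=0$ kills the diagonal-type contributions: reading off column $i$ of $D_{z_i}$ and setting it to zero forces a batch of the $a_i,b_i$ to be zero immediately (for instance $\delta^-(z_1,z_1)=0$ gives $b_1=0$ via the $(2,1)$-entry; $\delta^-(z_2,z_2)=0$ gives $a_2=0$; $\delta^-(z_3,z_3)=0$ gives $a_3=0$; $\delta^-(z_4,z_4)=0$ gives $b_4=0$). Then, exactly as in the earlier propositions, I would use the off-diagonal skew relations $\delta^-(z_i,z_j)=-\delta^-(z_j,z_i)$ for a well-chosen pair — most efficiently $j=1$, since $D_{z_1}$ is already pinned down by the first step — to eliminate the remaining parameters $a_1,b_2,b_3,a_4$. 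Comparing the $z$-components on both sides gives a small homogeneous linear system whose only solution is $a_i=b_i=0$ for all $i$, so every $D_{z_i}$ is the zero matrix and hence $\delta^-(x,y)=\delta^-\!\big(\sum x_i z_i,\sum y_j z_j\big)=\sum_{i,j}x_iy_j\,\delta^-_{z_i}(z_j)=0$.

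The only real obstacle is bookkeeping: one must make sure that the particular pairs $(z_i,z_j)$ chosen for the off-diagonal relations actually involve all of the still-undetermined scalars, and that the multiplication table of $\mathcal{S}_2$ (with its coefficients $-1,1,3,-3,-2$ etc.) is read correctly when expanding $\delta^-_{z_i}(z_j)$ through \eqref{eq3}. Since $\mathcal{S}_2$ is only $4$-dimensional and $D_{z_1}$ collapses to a single nonzero slot after the diagonal step, a single relation such as $\delta^-(z_4,z_1)=-\delta^-(z_1,z_4)$ together with $\delta^-(z_2,z_1)=-\delta^-(z_1,z_2)$ and $\delta^-(z_3,z_1)=-\delta^-(z_1,z_3)$ should already force everything, so the computation is short. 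I would therefore present it in the same compact style as Proposition \ref{pro1}: state the two batches of vanishing scalars, display the resulting zero matrix, and conclude $\delta^-=0$.
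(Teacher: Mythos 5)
Your proposal is correct and follows essentially the same route as the paper: each $\delta^-(z_i,\cdot)$ is a derivation, so Lemma \ref{lem3} reduces $\delta^-$ to the eight scalars $a_i,b_i$, the diagonal conditions $\delta^-(z_i,z_i)=0$ kill $b_1,a_2,b_2,a_3,a_4,b_4$, and the off-diagonal skew relations finish the job (the paper pairs with $z_2$, you pair with $z_1$; either choice eliminates the remaining $a_1,b_3$). One bookkeeping caution: the diagonal step already forces $b_2=0$ and $a_4=0$ (not only $a_2$ and $b_4$), and this matters because your $z_1$-pairings alone would never see $a_4$, so the full diagonal list must be recorded before the off-diagonal step.
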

\begin{proof}
Let $\delta^-$ be a skew-symmetric biderivation of $\mathcal{S}_2$. Then, for $i=1,\dots,4$ the matrix of $\delta^-_{z_i}$ is of the form (\ref{eq3}).\\ 
Since $ \delta^-$ is skew-symmetric, then, $\delta^-(z_i,z_i)=0$ for $i=1,2,3,4$. Therefore,
$$b_1=a_2=b_2=a_3=a_4=b_4=0.$$
For $i=1,3$, the equality $\delta^-(z_i,z_2)=-\delta^-(z_2,z_i)$ implies that
$$a_1=b=3=0.$$
That is, for $i=1,2,3,4$. 
\begin{center}
$D_{z_i}= \begin{pmatrix}
		0 & 0 &0 &0\\
		0 & 0 &0 &0\\
		0 & 0 &0 &0\\
		0 & 0 &0 &0
		\end{pmatrix}. $
\end{center}
Therefore, $\delta^-(x,y)=0$ for all $x,y \in \mathcal{S}_2$.
\end{proof}
Symmetric biderivation of $\mathcal{S}_2$ are characterize in the following theorem.
\begin{theorem}
\label{pro2}
The algebra $\mathcal{S}_2$ has no nontrivial symmetric biderivation.
\end{theorem}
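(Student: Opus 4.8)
The plan is to run the same bookkeeping used for the skew-symmetric case in Theorem~\ref{pro1}. The starting point is that, for any biderivation $\delta^{+}$, the second defining identity in Definition~\ref{def3} makes each partial map $\delta^{+}_{z_i}=\delta^{+}(z_i,-)$ a derivation of $\mathcal{S}_2$; hence by Lemma~\ref{lem3} its matrix has the form (\ref{eq3}) for suitable scalars $a_i,b_i\in\mathbb{F}$, i.e.
$$\delta^{+}(z_i,z_1)=-2b_i z_2,\quad \delta^{+}(z_i,z_2)=a_i z_2-3b_i z_3,\quad \delta^{+}(z_i,z_3)=2a_i z_3,\quad \delta^{+}(z_i,z_4)=b_i z_1-a_i z_4$$
for $i=1,2,3,4$. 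What is left is to force every $a_i$ and $b_i$ to vanish using the symmetry relations $\delta^{+}(z_i,z_j)=\delta^{+}(z_j,z_i)$, and then to conclude via $\delta^{+}(x,y)=\sum_{i,j}x_i y_j\,\delta^{+}_{z_i}(z_j)$.

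Concretely, first I would compare $\delta^{+}(z_2,z_4)=b_2 z_1-a_2 z_4$ with $\delta^{+}(z_4,z_2)=a_4 z_2-3b_4 z_3$; matching coefficients on the basis $\{z_1,z_2,z_3,z_4\}$ kills $a_2,b_2,a_4,b_4$. Next, $\delta^{+}(z_2,z_1)=-2b_2 z_2$ against $\delta^{+}(z_1,z_2)=a_1 z_2-3b_1 z_3$ gives, using $b_2=0$, that $a_1=b_1=0$. Finally, $\delta^{+}(z_2,z_3)=2a_2 z_3$ against $\delta^{+}(z_3,z_2)=a_3 z_2-3b_3 z_3$ gives, using $a_2=0$, that $a_3=b_3=0$. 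At this stage all eight scalars vanish, so every $D_{z_i}$ in (\ref{eq3}) is the zero matrix and $\delta^{+}$ is the zero map on $\mathcal{S}_2$.

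I do not expect any genuine obstacle here: the argument is purely linear-algebraic and mirrors Theorem~\ref{pro1} almost verbatim, the only point requiring a line of care being the verification that Lemma~\ref{lem3} is applicable, i.e.\ that $\delta^{+}(z_i,-)$ is a derivation for each $i$, which is exactly the second biderivation identity of Definition~\ref{def3}. Once this theorem is in place, combining it with Theorem~\ref{pro1} and the direct-sum decomposition $BDer(\mathcal{S}_2)=BDer_{+}(\mathcal{S}_2)\oplus BDer_{-}(\mathcal{S}_2)$ immediately yields $BDer(\mathcal{S}_2)=\{0\}$, which completes the description of the biderivations of $\mathcal{S}_2$.
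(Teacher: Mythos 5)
Your proposal is correct and follows essentially the same route as the paper: identify each partial map $\delta^{+}(z_i,-)$ as a derivation via the second identity of Definition~\ref{def3}, invoke Lemma~\ref{lem3} to put its matrix in the form (\ref{eq3}), and then kill all parameters $a_i,b_i$ by comparing $\delta^{+}$ on pairs of basis elements. The only (immaterial) difference is that you compare against $z_2$ (and use $z_1,z_3$ pairs), whereas the paper compares $\delta^{+}(z_i,z_4)=\delta^{+}(z_4,z_i)$ for $i=1,2,3$; both sets of equations force $a_i=b_i=0$ for all $i$.
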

\begin{proof}
Let $\delta^+$ be a skew-symmetric biderivation of $\mathcal{S}_2$. Then, for $i=1,2,3,4$ the matrix of $\delta^+_{z_i}$ is of the form (\ref{eq3}).\\
For $i=1,2,3$ the equality $\delta^+(z_i,z_4)=\delta^+(z_4,z_i)$ we deduce,
$$a_i=b_i=0,\quad i=1,2,3,4.$$
That is, for $i=1,2,3,4$. 
\begin{center}
$D_{z_i}= \begin{pmatrix}
		0 & 0 &0 &0\\
		0 & 0 &0 &0\\
		0 & 0 &0 &0\\
		0 & 0 &0 &0
		\end{pmatrix}. $
\end{center}
Therefore, $\delta^+(x,y)=0$ for all $x,y \in \mathcal{S}_2$.
\end{proof}
Our main result of this section is now stated and proved in the following theorem.
\begin{theorem}
\label{thm2}
 $\delta$ is a biderivation of $\mathcal{S}_2$ if and only if $\delta$ is a zero mapping. i.e 
$BDer(\mathcal{S}_2)=\lbrace 0 \rbrace$.
\end{theorem}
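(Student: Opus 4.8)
The plan is to obtain this as an immediate consequence of the two preceding theorems together with the direct-sum decomposition $BDer(\mathcal{S}_2)=BDer_+(\mathcal{S}_2)\oplus BDer_-(\mathcal{S}_2)$ recorded in Section~1. Recall that for any biderivation $\delta$ of an algebra $\mathcal{A}$ one forms $\delta^+(x,y)=\delta(x,y)+\delta(y,x)$ and $\delta^-(x,y)=\delta(x,y)-\delta(y,x)$, which lie in $BDer_+(\mathcal{A})$ and $BDer_-(\mathcal{A})$ respectively, and one has $\delta=\tfrac12(\delta^++\delta^-)$. Thus to prove $BDer(\mathcal{S}_2)=\lbrace 0\rbrace$ it suffices to know that both the symmetric and the skew-symmetric part of every biderivation of $\mathcal{S}_2$ vanish.

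Concretely, I would argue as follows. Let $\delta$ be a biderivation of $\mathcal{S}_2$. Decompose $\delta=\tfrac12(\delta^++\delta^-)$ with $\delta^+$ symmetric and $\delta^-$ skew-symmetric. By the theorem stating that $\mathcal{S}_2$ has no nontrivial symmetric biderivation we get $\delta^+=0$, and by the theorem stating that $\mathcal{S}_2$ has no nontrivial skew-symmetric biderivation we get $\delta^-=0$; hence $\delta=0$. The converse is trivial, since the zero bilinear map obviously satisfies both Leibniz-type identities of Definition~\ref{def3}. Therefore $BDer(\mathcal{S}_2)=\lbrace 0\rbrace$.

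There is no genuine obstacle here: all the substantive computation has already been done, namely writing each partial map $\delta_{z_i}=\delta(z_i,-)$ in the matrix form \eqref{eq3} via Lemma~\ref{lem3} (this is legitimate because fixing one argument of a biderivation yields an ordinary derivation of $\mathcal{S}_2$, so the lemma applies verbatim) and then forcing every parameter $a_i,b_i$ to vanish using the symmetry or skew-symmetry constraints on the pairs $(z_i,z_2)$ and $(z_i,z_4)$. The only point worth a line of care in writing up is to state explicitly that the decomposition $BDer=BDer_+\oplus BDer_-$ reduces the claim to the two already-proved theorems, so that the present theorem is a one-paragraph corollary rather than a fresh computation.
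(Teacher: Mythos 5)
Your proposal is correct and matches the paper's intent exactly: the theorem is meant as an immediate consequence of the decomposition $BDer(\mathcal{S}_2)=BDer_+(\mathcal{S}_2)\oplus BDer_-(\mathcal{S}_2)$ together with the two preceding theorems showing that $\mathcal{S}_2$ admits no nontrivial symmetric or skew-symmetric biderivation. The paper gives no separate argument beyond this, so your one-paragraph corollary-style write-up is essentially the paper's own proof.
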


\vspace{1cm}
\noindent \textbf{Acknowledgements:} The authors thank the referees for their valuable comments that contributed to a sensible improvement of the paper.

\section*{Declarations}
\noindent \textbf{Conflicts of interest:} The authors declare that they have no conflict of interest.


\begin{thebibliography}{99}
 
\bibitem{bn}
Benayadi, S.  Oubba, H. Nonassociative algebras of biderivation-type. Linear Algebra and its Applications, 2024, vol. 701, p. 22-60.

\bibitem{br0}
 Bresar, M.  Martindale, W. S.  Miers, C. R. Centralizing maps in prime rings with involution, J. Algebra 161, 342-357 (1993).
 \bibitem{br1}
   Bresar, M.  Zhao, K. Biderivations and commuting linear maps on Lie algebras, J. Lie Theory 28,
885-900 (2018).
\bibitem{du}
 Du, Y. Wang, Y.  Biderivations of generalized matrix algebras, Linear Algebra Appl. 438(11), 4483–
4499 (2013).
\bibitem{Fer}
 Ferreira, B.  Kaygorodov, I. Lopatkin, V.
$\frac{1}{2}$-derivations of Lie algebras and transposed Poisson algebras, Revista dela Real Academia de Ciencias Exactas, Fısicas y Naturales. Serie A. Matematicas, 115 (2021), 3, 142.

\bibitem{Fil1}
Filppov, V.T.  On $\delta$-derivations of Lie algebras. Sib. Math. J. 39(6), 1218–1230 (1998)

\bibitem{Fil2}
Filppov, V.T.  $\delta$-Derivations of prime Lie algebras. Sib. Math. J. 40(1), 174–184 (1999)

\bibitem{Fil3}
Filippov, V.T. On $\delta$-derivations of prime alternative and Malcev algebras. Algebra Log. 39(5), 354–358
(2000)
\bibitem{Goze}
 Goze, M.  Remm, E. 2-dimensional algebras, Afr. J. Math. Phys. 10 (1) (2011) 81-91.

\bibitem{Kad}
Kadison, R.V. Local derivations, Journal of Algebra, 130, (1990), 494-509.




\bibitem{Kan1}
Kantor, I.L. Certain generalizations of Jordan algebras (Russian). Trudy Sem. Vector. Tenzor. Anal. 16(1972) 407-499.

\bibitem{Kan2}
Kantor, I.L. The universal conservative algebra. Siberian Mathematical Journal 31 (3)(1990) 388-395.

\bibitem{Kan3}
 Kantor, I. On an extension of a class of Jordan algebras, Algebra Logic 28 (2) (1989) 117-121.

\bibitem{Kayg1}
Kaygorodov, I. Lopatin, A. Popov, Yo. Conservative algebras of 2-dimensional algebras. Linear Algebra and its Applications 486 (2015) 255-274.
\bibitem{Kay}
 Kaygorodov, I, On the Kantor product, arXiv:1506.00736.
 \bibitem{Lar}
 Larson, D.R. Sourour, A.R Local derivations and local automorphisms of B(X), Proceedings of Symposia in Pure Mathematics, 51 (1990), 187-194.

\bibitem{ob0}
Oubba, H. Generalized quaternion algebras. Rendiconti del Circolo Matematico di Palermo Series 2, 2023, vol. 72, no 8, p. 4239-4250.
\bibitem{ob1}
Oubba, H. Biderivations, commuting linear maps, post-Lie algebra structure on solvable Lie algebras of maximal rank. Communications in Algebra, 2024, p. 1-10.
\bibitem{ob2}
Oubba, H. Local (2-Local) derivations and automorphisms and biderivations of complex $\omega$-Lie algebras. Le Matematiche, 2024, vol. 79, no 1, p. 135-150.







\bibitem{Sem}
 Semrl P., Local automorphisms and derivations on  B(H), Proceedings of the American Mathematical Society, 125 (1997), 2677-2680.




\bibitem{Pet}
Petersson, H. The classification of two-dimensional nonassociative algebras, Results Math. 37 (1-2)
(2000) 120-154.




\end{thebibliography}
\end{document}